\long\def\symbolfootnote[#1]#2{\begingroup%
\def\thefootnote{\fnsymbol{footnote}}\footnote[#1]{#2}\endgroup}
\newcommand{\Q}{\mathbb Q}
\newcommand{\Aut}{\textup{Aut}}
\newcommand{\Id}{\textup{Id}}
\newcommand{\ad}{\mathrm{ad}}
\newcommand{\Int}{\mathrm{Int}}
\def \exp {\mathrm{exp}}
\def \U {\mathrm{U}}
\newcommand{\sslash}{//}
\newcommand{\diag}{\textup{diag}}
\newcommand{\secref}[1]{Section~\ref{#1}}
\newcommand{\thmref}[1]{Theorem~\ref{#1}}
\newcommand{\lemref}[1]{Lemma~\ref{#1}}
\newcommand{\remref}[1]{Remark~\ref{#1}}
\newcommand{\propref}[1]{Proposition~\ref{#1}}
\newcommand{\corref}[1]{Corollary~\ref{#1}}
\def\imod#1{\allowbreak\mkern10mu({\operator@font mod}\,\,#1)}
\newtheorem{theorem}{Theorem}[section]
\newtheorem{lemma}[theorem]{Lemma}
\newtheorem{corollary}[theorem]{Corollary}
\newtheorem{proposition}[theorem]{Proposition}
\newtheorem*{theorem*}{Theorem}
\theoremstyle{definition}
\newtheorem{remark}[theorem]{Remark}
\newtheorem{notation}[theorem]{Notation}
\newtheorem{question}[theorem]{Question}
\numberwithin{equation}{section}
\newcommand{\ignore}[1]{}
\newcommand{\mynote}[1]{}
\def \GL {\mathrm{GL}}
\def \SL {\mathrm{SL}}
\def \B {\mathrm{B}}
\begin{document}
\setcounter{section}{0}
\setcounter{tocdepth}{1}
% document information
\title[Twisted Conjugacy in Linear Algebraic Groups]{Twisted Conjugacy in Linear Algebraic Groups}
\author[Sushil Bhunia]{Sushil Bhunia}
\author[Anirban Bose]{Anirban Bose}
\email{sushilbhunia@gmail.com}
\email{anirban.math@gmail.com}
\thanks{Bose is supported by DST-INSPIRE Faculty fellowship(IFA DST/INSPIRE/04/2016/001846)}
\address{Indian Institute of Science Education and Research (IISER) Mohali, Knowledge City,  Sector 81, S.A.S. Nagar 140306, Punjab, India}
\subjclass[2010]{Primary 20G07, 20E36}
\keywords{twisted conjugacy, algebraic groups.}
\date{\today}
\begin{abstract}
Let $k$ be an algebraically closed field, $G$ a linear algebraic group over $k$ and $\varphi\in \Aut(G)$, the group of all algebraic group automorphisms of $G$. Two elements $x, y$ of $G$ are said to be $\varphi$-twisted conjugate if $y=gx\varphi(g)^{-1}$ for some $g\in G$. In this paper we prove that for a connected non-solvable linear algebraic group $G$ over $k$, the number of its $\varphi$-twisted conjugacy classes is infinite for every $\varphi\in \Aut(G)$.
\end{abstract}
\maketitle
\section{Introduction}
Let $G$ be a group and $\varphi$  an endomorphism of $G$. Two elements $x, y\in G$ are said to be $\varphi$-twisted conjugate, denoted by $x\sim_{\varphi} y$, if $y=gx\varphi(g)^{-1}$ for some $g\in G$. Clearly, $\sim_{\varphi}$ is an equivalence relation on $G$. The equivalence classes with respect to this relation are called \textit{$\varphi$-twisted conjugacy classes} or \textit{Reidemeister classes} of $\varphi$. If $\varphi=\Id$, then the $\varphi$-twisted conjugacy classes are the usual conjugacy classes. Let $[x]_\varphi$ denote the $\varphi$-twisted conjugacy class containing $x\in G$ and $\mathcal{R}(\varphi):=\{[x]_\varphi\mid x\in G\}$. The cardinality of $\mathcal{R}(\varphi)$, denoted by $R(\varphi)$, is called the \emph{Reidemeister number} of $\varphi$. A group $G$ is said to have  the \textit{$R_{\infty}$-property} if $R(\varphi)$ is infinite for every automorphism $\varphi$ of $G$.

The problem of determining  groups which  have the $R_{\infty}$-property is an active area of research begun by Fel'shtyn and Hill \cite{fh94} although the study of twisted conjugacy can be traced back to the works of Gantmakher in \cite{gant}. The reader may refer to \cite{FN} and the references therein for more literature. The $R_{\infty}$-property of irreducible lattices in a connected semisimple Lie group of real rank at least $2$ has been studied by Mubeena and Sankaran (see \cite[Theorem 1]{mstrans}). Nasybullov showed that if $K$ is an integral domain of zero characteristic and $\Aut(K)$ is torsion then $\GL_n(K)$ and $\SL_n(K)$ (for $n> 2$) have the $R_{\infty}$-property (see \cite{nasy12}). 
A Chevalley group $G$ (resp. a twisted Chevalley group $G'$) over a field $K$ of characteristic zero possesses the $R_{\infty}$-property if the transcendence degree of $K$ over $\Q$ is finite, see \cite[Theorem 3.2]{FN} (resp. \cite[Theorem 1.2]{bdr}).
It is worth mentioning that a reductive linear algebraic group $G$ over an algebraically closed field $k$ of characteristic zero possesses the  $R_{\infty}$-property if the transcendence degree of $k$ over $\Q$ is finite and the radical of $G$ is a proper subgroup of $G$ (see \cite[Theorem 4.1]{FN}). The converse of the latter  holds if the group $G$ is a Chevalley group of classical type ($A_n, B_n, C_n, D_n$) as shown in \cite[Theorem 8]{nas20}, where the author proves that if $k$ has infinite transcendence degree over $\Q$ then there exists an automorphism $\varphi$ of $G$ such that $R(\varphi)=1$. However, it turns out that the automorphism thus obtained is induced by a non-trivial automorphism of $k$ and therefore $\varphi$ is not an algebraic group automorphism of $G$. This motivates the following consideration:

Let $k$ be an algebraically closed field and  $G$ a linear algebraic group defined over $k$. Let $\Aut(G)$ denote the group of all algebraic group automorphisms of $G$.  
We say that $G$ has the \emph{algebraic $R_{\infty}$-property} if $R(\varphi)=\infty$ for all $\varphi\in \Aut(G)$. Since we shall deal with only algebraic automorphisms of algebraic groups in the sequel, we call the algebraic $R_\infty$-property simply as the $R_\infty$-property of $G$. A natural question that arises is the following:
\begin{question}\label{question}
Under what conditions does $G$ have the $R_{\infty}$-property ?
\end{question}
Motivation for the present work comes from the results of Springer in \cite{springer}, where he studied the twisted conjugacy classes in simply connected semisimple algebraic groups. 
A classical result by Steinberg says that for a connected linear algebraic group $G$ over an algebraically closed field $k$ and a surjective endomorphism $\varphi$ of $G$, if $\varphi$ has a finite set of fixed points then $G=\{g\varphi(g)^{-1}\mid g\in G\}=[e]_{\varphi}$, i.e., $R(\varphi)=1$. Therefore, in this scenario, $G$ does not satisfy the $R_{\infty}$-property (see \cite[Theorem 10.1]{St}).  
For an endomorphism $\varphi$ of a simple algebraic group $G$ there exists the following dichotomy:
 (1) $\varphi$ is an automorphism. (2) $\varphi$ has a finite set of fixed points (see  \cite[Corollary 10.13]{St}). 
In an attempt to answer Question \ref{question}, we prove the following sufficient condition:
\begin{theorem*}[\thmref{mainthm}]
Let $G$ be a connected non-solvable linear algebraic group over an algebraically closed field $k$. Then $G$ possesses the $R_{\infty}$-property. 
\end{theorem*}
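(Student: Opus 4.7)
The strategy is two successive reductions followed by a norm-map invariant argument.

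\emph{Reduction to the semisimple case.} The radical $R(G)$ is a closed characteristic subgroup, hence $\varphi$-invariant, and the quotient $\bar G := G/R(G)$ is a nontrivial connected semisimple algebraic group equipped with an induced automorphism $\bar\varphi \in \Aut(\bar G)$. The projection $G \twoheadrightarrow \bar G$ sends $\varphi$-twisted conjugacy classes surjectively onto $\bar\varphi$-twisted classes, giving $R(\varphi) \ge R(\bar\varphi)$. It therefore suffices to prove the theorem for connected semisimple (nontrivial) $G$.

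\emph{Reduction to a finite-order pinning-preserving automorphism.} A direct calculation shows that the translation $x \mapsto xh$ gives a bijection $\mathcal{R}(\Int(h)\circ\varphi) \leftrightarrow \mathcal{R}(\varphi)$, so $R(\varphi)$ is an invariant of the image of $\varphi$ in $\Out(G)$. Since any two Borel-torus pairs are $G$-conjugate, after an inner modification I may assume that $\varphi$ stabilizes a fixed pair $T \subset B$; a further adjustment by $\Int(t)$ for a suitable $t \in T$ lets me take $\varphi$ to preserve a pinning. Such a pinning-preserving automorphism has finite order $n$, dividing the order of the symmetry group of the Dynkin diagram.

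\emph{The norm-map invariant.} Define $N \colon G \to G$ by $N(g) := g\,\varphi(g)\,\varphi^2(g)\cdots\varphi^{n-1}(g)$. Using $\varphi^n = \Id$, a telescoping cancellation yields $N(hg\varphi(h)^{-1}) = h\,N(g)\,\varphi^n(h)^{-1} = h\,N(g)\,h^{-1}$, so the ordinary $G$-conjugacy class of $N(g)$ depends only on the $\varphi$-twisted class of $g$. For $t$ in the fixed subtorus $T^\varphi$ one has $N(t) = t^n$, and the $n$-th power map is surjective on the connected torus $(T^\varphi)^0$ (divisibility over the algebraically closed field $k$), so $N\bigl((T^\varphi)^0\bigr) = (T^\varphi)^0$. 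For any symmetry of a nonempty Dynkin diagram the rank of $(T^\varphi)^0$ equals the number of $\varphi$-orbits on the simple roots, which is at least one; hence $(T^\varphi)^0$ is infinite. Since $G$-conjugacy identifies elements of $T$ exactly under the finite Weyl group $W$, the infinite set $(T^\varphi)^0$ contains representatives of infinitely many distinct $G$-conjugacy classes. Pulling back along $N$ produces infinitely many $\varphi$-twisted conjugacy classes, yielding $R(\varphi) = \infty$.

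\emph{Main obstacle.} The delicate step is the second reduction: producing a finite-order, pinning-preserving representative of $\varphi$ modulo $\Inn(G)$ when $G$ has an intermediate isogeny type, where $\Aut(G) \to \Out(G)$ may not split as cleanly as in the adjoint case. A robust workaround is to pass further to the adjoint quotient $G^{\textup{ad}}$: since $\varphi$ preserves $Z(G)$, it induces $\varphi^{\textup{ad}} \in \Aut(G^{\textup{ad}})$ with $R(\varphi) \ge R(\varphi^{\textup{ad}})$, and one carries out the norm-map argument on $G^{\textup{ad}}$, where pinnings behave most cleanly. A minor secondary subtlety is the positive-dimensionality of $(T^\varphi)^0$, which I handle uniformly by the orbit-counting observation above rather than a case-by-case classification.
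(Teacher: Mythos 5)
Your argument is correct, but its core is genuinely different from the paper's. The outer reductions you perform --- killing the radical $R(G)$, passing to the adjoint quotient, and replacing $\varphi$ by a representative of its image in $\Out(G)$ --- are exactly the paper's \lemref{ses}\eqref{qtog}, \lemref{inner} and \lemref{ets}, and your use of the split exact sequence $1\to\Int(G)\to\Aut(G)\to\Gamma\to1$ to get a finite-order pinning-preserving representative on the adjoint group is the paper's \lemref{kernel}. Where you diverge is the semisimple case itself: the paper decomposes $G_{ad}$ into simple factors, proves \lemref{autprod} and \lemref{productgn} to handle the permutation of factors, and for a single simple factor invokes Springer's theorem that $G\sslash_{\overline{\rho}}G\cong\mathbb{A}^d$ with $d=\dim T^{\overline{\rho}}$, reducing everything to $d\geq 1$ (verified by a case analysis with type $A_{2l}$ treated separately in \propref{step1}). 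You instead use the norm map $N(g)=g\varphi(g)\cdots\varphi^{n-1}(g)$, whose equivariance $N(hg\varphi(h)^{-1})=hN(g)h^{-1}$ sends twisted classes to ordinary conjugacy classes; on $(T^{\varphi})^{0}$ it is the surjective $n$-th power map, $(T^{\varphi})^{0}$ is positive-dimensional because its cocharacter lattice contains the orbit sums of the fundamental coweights, and $T$ meets each semisimple conjugacy class in a single orbit of the finite Weyl group, so infinitely many twisted classes fall out at once. This route is more elementary (no invariant-theoretic quotient, no appeal to Springer), dispenses with the product decomposition and the $A_{2l}$ case split uniformly via orbit counting, and works in all characteristics since the $n$-th power map on a torus over an algebraically closed field is surjective even when the characteristic divides $n$. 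Both proofs ultimately rest on the same geometric fact, namely $\dim T^{\varphi}\geq 1$; the paper converts it into infinitude of classes through the quotient morphism of \lemref{git}, while you convert it through a transfer to ordinary conjugacy.
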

However, the above condition is not necessary as is evident from Example \ref{eg3} in \secref{example} of this article. Also, see \propref{solvable}. 
 
%%%%%%%%%%%%%%%%%%%%%%%%%%%%%%%%%%%%%%%%%%%%%%%%%%
\section{Preliminaries}\label{preliminaries}
In this section we fix some notations and terminologies and recall some results which will be used throughout this paper. 
\subsection{Linear algebraic groups and Chevalley groups}\label{agcg}
There are several excellent references for this topic (for example, see \cite{hum} for linear algebraic groups and  \cite{St2}, \cite{borel} for Chevalley groups). Fix an algebraically closed field $k$. A \emph{linear algebraic group} $G$ (over $k$) is a Zariski-closed subgroup of $\GL_n(k)$ for some $n\geq 1$.
Let $G$ be a connected linear algebraic group. The \emph{radical} (denoted by $R(G)$) of $G$ is defined to be the largest closed connected solvable normal subgroup of $G$. The \emph{unipotent radical} (denoted by $R_u(G)$) of $G$ is defined as the largest closed connected unipotent normal subgroup of $G$. The group $G$ is said to be \emph{semisimple} (resp. \emph{reductive}) if $R(G)=1$ (resp. $R_u(G)=1$). A non-commutative algebraic group $G$ is called \emph{simple} if it does not have any non-trivial proper closed connected normal subgroup. 
It turns out that up to isomorphism, every semisimple algebraic group over $k$ is obtained as a Chevalley group based on $k$ (see \cite[Chapter 5]{St2}), which we now define.

 Let $\mathcal{L}$ be a complex semisimple Lie algebra, $\mathcal{H}$ a Cartan subalgebra of $\mathcal{L}$ with the associated root system $\Phi$ and a simple subsystem $\Delta$. Then with respect to the adjoint action $\ad:\mathcal{L}\longrightarrow \mathfrak{gl}(\mathcal{L})$, $\mathcal{L}$ has the decomposition $\mathcal{L}=\mathcal{H}\oplus \left(\bigoplus\limits_{\alpha\in \Phi}\mathcal{L}_\alpha\right),$ where $\mathcal{L}_\alpha=\{x\in \mathcal{L}\mid \ad(h)(x)=[h,x]=\alpha(h)x\quad \forall\; h\in \mathcal{H}\}$, for each $\alpha\in \Phi$. There exists a Chevalley basis $B=\{h_\delta,e_\alpha:\delta\in \Delta,\alpha\in \Phi\}$ of $\mathcal{L}$, where $h_\alpha\in \mathcal{H}$ is the co-root associated to $\alpha\in \Phi$ and $e_\alpha\in \mathcal{L}_\alpha$ such that $[e_\alpha,e_{-\alpha}]=h_\alpha$ for each $\alpha\in \Phi$. Let $(V,\rho)$ be a finite dimensional complex representation of $\mathcal{L}$. For each $\mu\in \mathcal{H}^\ast$ let $V_\mu:=\{v\in V\mid \rho(h)(v)=\mu(h)v\quad\forall\; h\in \mathcal{H}\}$. The \emph{weights} of the representation $\rho$ are those $\mu\in \mathcal{H}^\ast$ such that $V_\mu\neq 0$ and the additive subgroup of $\mathcal{H}^\ast$ generated by the weights of $\rho$ forms a lattice, denoted by $L_\rho$. Let $L_1$ denote the lattice generated by all the weights of all representations of $\mathcal{L}$ and $L_0$ the sublattice (of $L_1$) generated by the roots $\alpha\in\Phi$. If $\rho$ is faithful then we have $L_0\subset L_\rho\subset L_1$. Moreover, if $L^\prime$ is any lattice such that $L_0\subset L^\prime\subset L_1$, then there exists a faithful representation $(V,\pi) $ of $\mathcal{L}$ such that $L_\pi=L^\prime$.
 
 So let $(V,\pi)$ be a faithful finite dimensional complex representation of $\mathcal{L}$. If $(\mathcal{U},\varphi)$ is the universal enveloping algebra of $\mathcal{L}$ then $\pi$ canonically extends to a representation of $\mathcal{U}$ in $V$ (also denoted by $\pi$). Since $\varphi$ is injective we identify the elements of $\mathcal{L}$ with their images in $\mathcal{U}$ under the homomorphism $\varphi$. Let $\mathcal{U}_\mathbb{Z}$ be the $\mathbb{Z}$-algebra generated by the elements $e_\alpha^n/n!$ ($\alpha\in \Phi, n\geq 0$). A \emph{lattice in $V$} is a finitely generated free abelian subgroup of $V$ with a $\mathbb{Z}$-basis which is also a $\mathbb{C}$-basis of $V$. It turns out that $V$ admits a lattice $M$ (say) which is invariant under $\pi(e_\alpha)^n/n!$ ($\alpha\in \Phi, n\geq 0$) and hence under $\mathcal{U}_\mathbb{Z}$ \cite[p. 16, Corollary 1(a)]{St2}. Now consider the module $M\otimes_\mathbb{Z} \mathbb{Z}[X]$, where $X$ is an indeterminate. Then $X^n\pi(e_\alpha)^n/n!$ acts on $M\otimes_\mathbb{Z} \mathbb{Z}[X]$. Since $\pi(e_\alpha)^n$ acts as zero on $V$ for sufficiently large values of $n$, we note that the expression $\sum\limits_{n=0}^\infty X^n\pi(e_\alpha)^n/n!$ acts on $M\otimes_\mathbb{Z} \mathbb{Z}[X]$ and hence on $M\otimes_\mathbb{Z} \mathbb{Z}[X]\otimes_\mathbb{Z} k$, where $k$ is an algebraically closed field. If $V_k:=M\otimes_\mathbb{Z} k$ then for each $t\in k$ the homomorphism $M\otimes_\mathbb{Z} \mathbb{Z}[X]\otimes_\mathbb{Z} k\longrightarrow V_k$ given by $X\mapsto t$ induces an automorphism $\sum\limits_{n=0}^\infty t^n\pi(e_\alpha)^n/n!$ of the $k$-vector space $V_k$. Set $\exp(t\pi(e_\alpha)):=\sum\limits_{n=0}^\infty t^n\pi(e_\alpha)^n/n!$.
 
 A \emph{Chevalley group of type $\Phi$ over $k$} associated to $\pi$ is  defined as the group $G(\pi,k):=\langle \exp (t\pi(e_\alpha))\mid \alpha\in \Phi,t\in k\rangle$.  It is a semisimple algebraic group over $k$ and every semisimple algebraic group over $k$ (with an associated root system $\Phi$) is isomorphic to some $G(\pi, k)$. If $L_\pi=L_1$, then the associated Chevalley group (denoted by $G_{sc}$) is said to be of \emph{universal type} and it is simply connected as an algebraic group. If $L_\pi=L_0$ then the associated Chevalley group (denoted by $G_{ad}$) is said to be of \emph{adjoint type} (it is adjoint in the sense of an algebraic group over $k$). 
  
Let $G(\pi ,k)$ and $G(\rho,k)$ be Chevalley groups constructed from the same Lie algebra $\mathcal{L}$ such that $L_{\rho}\subset L_{\pi}$. Then there exists a homomorphism of algebraic groups $f:G(\pi,k)\longrightarrow G(\rho,k)$ such that $f(\exp(t\pi(e_\alpha)))=\exp(t\rho(e_\alpha))$ for all $\alpha\in \Phi,t\in k$. If $L_\pi=L_\rho$ then $f$ is an isomorphism. If $\rho$ is the adjoint representation of $\mathcal{L}$ (i.e., $L_\rho=L_0$) then $\ker(f)=Z(G(\pi,k))$ \cite[Corollary 5 (p. 29), Corollary 1 (p. 41)]{St2}.

\subsection{Automorphisms}\label{automorphisms}

Let $G$ be a connected semisimple algebraic group. Fix a maximal torus $T$ and a Borel subgroup $B$ of $G$ such that $T\subset B$. Let $\Phi$ be the root system of $G$ determined by $T$ and $\Delta$ the simple subsystem of $\Phi$ determined by $B$. Let $\Aut(G)$ denote the group of all algebraic group automorphisms of $G$ and $\Int(G)$ the group of all inner automorphisms of $G$ (hence $\Int(G)$ is a subgroup of $\Aut(G)$). If $D:=\{\varphi\in\Aut(G)\mid \varphi(T)=T,\varphi(B)=B\}$ and $\Gamma$ is the group of all automorphisms of $\Phi$ which stabilize $\Delta$, then there exists a natural homomorphism $D\longrightarrow \Gamma$. With the above notation, we quote the following result:

\begin{lemma}\cite[Theorem 27.4]{hum}\label{staut}
Let $G$ be a semisimple algebraic group. Then the following holds: 
\begin{enumerate}
\item\label{staut1} $\Aut(G)=\Int(G)D$. 
\item\label{staut2} The natural map $D\longrightarrow \Gamma$ induces a monomorphism $\Aut(G)/\Int(G)\longrightarrow \Gamma$.
\end{enumerate}
\end{lemma}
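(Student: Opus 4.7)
The plan is to prove both parts by combining the conjugacy theorems for Borel subgroups and maximal tori with the explicit description of $G$ via its root subgroups and the Chevalley commutator relations; this is the strategy carried out in Humphreys, and I sketch the key steps.

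For (\ref{staut1}), take any $\varphi\in\Aut(G)$. Then $\varphi(B)$ is another Borel subgroup of $G$, and by the Borel conjugacy theorem there exists $g\in G$ with $g\,\varphi(B)\,g^{-1}=B$. Replacing $\varphi$ by $\Int(g)\circ\varphi$, we may assume $\varphi(B)=B$. Next, $\varphi(T)$ is a maximal torus of $B$, and since all maximal tori of $B$ are conjugate by elements of $B$, there exists $b\in B$ with $b\,\varphi(T)\,b^{-1}=T$. The automorphism $\Int(b)\circ\varphi$ then fixes both $T$ and $B$, hence lies in $D$, so $\varphi\in\Int(G)\cdot D$.

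For (\ref{staut2}), note first that $D\cap\Int(G)=\Int(T)$ (since $\Int(g)\in D$ forces $g\in N_B(T)=T$), and every inner automorphism acts trivially on the root system, giving $D\cap\Int(G)\subseteq\ker(D\to\Gamma)$. So it suffices to prove the reverse inclusion. Let $\sigma\in D$ have trivial image in $\Gamma$, so $\sigma$ acts as the identity on $\Delta$ and hence on $\Phi$. Since $\sigma$ preserves $T$, $B$, and each root, it must preserve every root subgroup $U_\alpha\cong\mathbb{G}_a$ setwise and therefore act on $U_\alpha$ as multiplication by some $c_\alpha\in k^\ast$. Compatibility with the Chevalley commutator relations
\[
[x_\alpha(t),x_\beta(s)]=\prod_{i,j>0} x_{i\alpha+j\beta}\bigl(N_{\alpha,\beta}^{i,j}\,t^i s^j\bigr)
\]
forces $c_{i\alpha+j\beta}=c_\alpha^i c_\beta^j$, so $\alpha\mapsto c_\alpha$ defines a homomorphism from the root lattice $\mathbb{Z}\Phi$ into $k^\ast$. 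Using that $\mathbb{Z}\Phi$ has finite index in the character group $X(T)$ and that $k^\ast$ is divisible, this extends to a character of $X(T)$, which is realized by evaluation at some $t_0\in T$ so that $\alpha(t_0)=c_\alpha$ for every $\alpha\in\Phi$. Then $\Int(t_0^{-1})\circ\sigma$ fixes every root subgroup pointwise, and the isomorphism theorem for reductive groups forces this adjusted automorphism to be the identity on all of $G$, yielding $\sigma=\Int(t_0)\in\Int(G)$.

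The hard step is the final one: deducing that a $D$-automorphism which fixes every root subgroup pointwise must be trivial on all of $G$. This rests on the isomorphism theorem for semisimple groups, which pins down an automorphism by its restrictions to $T$ and to the root subgroups; the action on $T$ is then forced by compatibility with the action on the $U_\alpha$ through relations like $t\,x_\alpha(s)\,t^{-1}=x_\alpha(\alpha(t)s)$ and the expressions $h_\alpha(u)=w_\alpha(u)w_\alpha(1)^{-1}$ coming from the rank-one subgroups $\langle U_\alpha,U_{-\alpha}\rangle$. The algebraic closure of $k$ enters crucially in the preceding extension-of-characters step, ensuring that the abstract compatibility data $\{c_\alpha\}$ is actually realized by an element of $T$.
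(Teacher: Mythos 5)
The paper does not prove this lemma at all; it is quoted verbatim from Humphreys \cite[Theorem 27.4]{hum}, so there is no in-paper argument to compare against. Your reconstruction follows the standard proof from that reference. Part (\ref{staut1}) is correct as written: conjugacy of Borel subgroups, then conjugacy of maximal tori inside $B$, reduces any $\varphi$ to an element of $D$ modulo $\Int(G)$. The identification $D\cap\Int(G)=\{\Int(t)\mid t\in T\}$ via $N_G(B)=B$ and $N_B(T)=T$ is also correct, and it correctly reduces (\ref{staut2}) to showing that $\sigma\in D$ acting trivially on $\Phi$ is inner.

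The one step that would fail as stated is the derivation of $c_{i\alpha+j\beta}=c_\alpha^i c_\beta^j$ from the Chevalley commutator relations. The structure constants $N_{\alpha,\beta}^{i,j}$ lie in $\{\pm1,\pm2,\pm3\}$, and the paper works over an arbitrary algebraically closed field, so in characteristic $2$ or $3$ some of these constants vanish in $k$ and the corresponding relation gives no information about $c_{i\alpha+j\beta}$. Concretely, in type $B_2$ with $\alpha$ short simple and $\beta$ long simple, the only decomposition of $2\alpha+\beta$ as a sum of two positive roots is $\alpha+(\alpha+\beta)$, and the relevant constant is $\pm2$; in characteristic $2$ your argument cannot determine $c_{2\alpha+\beta}$, so the homomorphism on all of $\mathbb{Z}\Phi$ is not established. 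The standard repair (and what Humphreys actually does) is to choose $t_0\in T$ with $\alpha(t_0)=c_\alpha$ only for $\alpha\in\Delta$ (possible since $\Delta$ is $\mathbb{Z}$-linearly independent in $X(T)$ and $k^\ast$ is divisible), observe that $c_\alpha c_{-\alpha}=1$ from the rank-one subgroups $\langle U_\alpha,U_{-\alpha}\rangle$ so that $\Int(t_0^{-1})\circ\sigma$ fixes $U_{\pm\alpha}$ pointwise for all simple $\alpha$, and then conclude from the generation $G=\langle U_{\pm\alpha}\mid\alpha\in\Delta\rangle$ --- which also replaces your appeal to the isomorphism theorem by something much lighter. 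With that modification your argument is complete and characteristic-free.
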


Furthermore, since we are working over an algebraically closed field, the group $G$ is split and hence we have 
\begin{lemma}\cite[Theorem 25.16]{kmrt}\label{kernel}
If $G$ is a simply connected or adjoint type semisimple algebraic group, then the sequence
\[\xymatrix{1\ar[r]& \Int(G)\ar[r]&\Aut(G)\ar[r]&\Gamma\ar[r]&1}\] is split exact. 
\end{lemma}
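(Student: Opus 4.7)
The plan is to establish exactness at each of the three intermediate terms of the sequence and then exhibit an explicit splitting by constructing, for every $\sigma\in\Gamma$, a canonical algebraic group automorphism of $G$ inducing $\sigma$ on the root system.

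Exactness at $\Int(G)$ is trivial, and $\Int(G)$ is normal in $\Aut(G)$ via the identity $\varphi\circ\iota_g\circ\varphi^{-1}=\iota_{\varphi(g)}$. Exactness at $\Aut(G)$, that is the equality $\ker(\Aut(G)\to\Gamma)=\Int(G)$, is already contained in \lemref{staut}\eqnref{staut2}, which gives the injectivity of the induced map $\Aut(G)/\Int(G)\to\Gamma$. Thus the remaining content is (i) surjectivity of the natural map $\Aut(G)\to\Gamma$ and (ii) the existence of a splitting $\Gamma\to\Aut(G)$.

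For (i), I would use the realization $G=G(\pi,k)$ with $L_\pi\in\{L_0,L_1\}$ described in \secref{agcg}, together with the root subgroups $x_\alpha(t)=\exp(t\pi(e_\alpha))$. Given $\sigma\in\Gamma$, define a candidate automorphism $\tilde\sigma$ on the generators by $\tilde\sigma(x_\alpha(t))=x_{\sigma(\alpha)}(\eta_\alpha t)$ for suitable signs $\eta_\alpha\in\{\pm 1\}$, and on the maximal torus $T$ by the automorphism dual to $\sigma$ acting on the character lattice. The latter makes sense precisely because $L_\pi$, being equal to $L_0$ or $L_1$, is automatically $\Gamma$-stable inside $L_1$. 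The key verification is that $\tilde\sigma$ respects the Chevalley commutator relations $[x_\alpha(s),x_\beta(t)]=\prod_{i,j>0}x_{i\alpha+j\beta}(C^{\alpha,\beta}_{i,j}s^it^j)$. Since $\sigma$ preserves all Cartan integers, the shapes of the right-hand sides are preserved; the structure constants themselves transform only up to signs, and one can inductively choose the $\eta_\alpha$ (anchored at $\eta_\alpha=1$ for $\alpha\in\Delta$) to compensate. This is Chevalley's classical construction of diagram automorphisms.

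The main obstacle lies in (ii): the set-theoretic lift $\sigma\mapsto\tilde\sigma$ must be promoted to a group homomorphism. In the simply connected case $G=G_{sc}$, the Steinberg presentation of $G(k)$ by the generators $x_\alpha(t)$ modulo the torus, Weyl, and commutator relations guarantees that the canonical sign choice yields a unique extension, and that the resulting assignment $\sigma\mapsto\tilde\sigma$ is multiplicative; uniqueness of the extension forces $\widetilde{\sigma\tau}=\tilde\sigma\tilde\tau$. The adjoint case is then deduced from the isogeny $G_{sc}\twoheadrightarrow G_{ad}$ described at the end of \secref{agcg}: its kernel is $Z(G_{sc})$, a characteristic subgroup of $G_{sc}$ stabilized by every $\tilde\sigma$, so $\tilde\sigma$ descends to an automorphism of $G_{ad}$ and the section descends with it. In intermediate isogeny types the analogous compatibility can fail, which is precisely why the hypothesis isolates the two extreme isogeny classes. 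Since $\Aut(G)\to\Gamma$ followed by $\sigma\mapsto\tilde\sigma$ is the identity on $\Gamma$ by construction, the sequence splits.
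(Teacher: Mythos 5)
The paper does not actually prove this lemma: it is quoted as \cite[Theorem 25.16]{kmrt} and used as a black box, so there is no internal proof to compare yours against. That said, your outline is the standard argument, and it overlaps substantially with machinery the paper builds later for other purposes: your lift $\sigma\mapsto\tilde\sigma$ is exactly the automorphism $\overline{\rho}$ of Notation \ref{not} (Steinberg's Theorem 29), exactness at $\Aut(G)$ is indeed \lemref{staut}(\ref{staut2}), and your explanation of where the hypothesis enters is the right one ($L_0$ and $L_1$ are automatically $\Gamma$-stable, whereas an intermediate lattice $L_\pi$ need not be, e.g.\ the half-spin lattices in type $D_{2l}$).

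Two places in your sketch are thinner than they should be. First, in this paper $\Aut(G)$ means \emph{algebraic} group automorphisms, so it is not enough to produce $\tilde\sigma$ as an abstract automorphism respecting the Chevalley relations; one must check it is a morphism of varieties with morphism inverse. This is precisely the content of the paper's \lemref{algebraic}, and it is the step you pass over most quickly. Second, your justification of multiplicativity (``uniqueness of the extension forces $\widetilde{\sigma\tau}=\tilde\sigma\tilde\tau$'') is not quite the right mechanism: uniqueness only says a homomorphism is determined by its values on generators, so you must verify that $\widetilde{\sigma\tau}$ and $\tilde\sigma\circ\tilde\tau$ actually agree on a generating set. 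This does work, because the signs are normalized to $+1$ whenever $\alpha$ or $-\alpha$ is simple and $G$ is generated by the subgroups $X_\alpha$ with $\alpha\in\pm\Delta$ (every root is Weyl-conjugate to a simple root); on these generators both maps send $x_\alpha(t)$ to $x_{\sigma\tau(\alpha)}(t)$, using that $\Gamma$ stabilizes $\Delta$. With the algebraicity supplied as in \lemref{algebraic} and the multiplicativity argument repaired as above, your proof is correct.
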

 
\begin{remark}\label{graphexist}
	If $G$ is simple with root system $\Phi$ then the group $\Gamma$ is non-trivial if and only if $\Phi$ is of type $A_l\, (l\geq 2), D_l\, (l\geq 4)$ and $E_6$ (see \cite[p. 66, Table 1]{humlie} and Figure \ref{Figure1}). Therefore if $\Phi$ is not of type $A_{2l}$ ($l\geq1$) then for every $\gamma\in \Gamma$ there exists a simple root $\alpha\in \Phi$ such that $\gamma(\alpha)=\alpha$.
\end{remark}
%%%%%%%%%%%%%%%%%%%%%%%%%%%%%%%%%%%%%%%%%%%
\newpage
\begin{figure}[ht]
	\caption{} \label{Figure1}
	 \vskip 0.2 cm
\centering 
\begin{tikzpicture}
\node at (-0.5,0) {$A_l\, (l\geq 2)$};
\node (a1) at (1,0) {$\bullet$};
\node (a2) at (2,0) {$\bullet$};
\node (al-1) at (3,0) {$\bullet$};
\node (al) at (4,0) {$\bullet$};
\draw[-] (1,0) -- (2,0);
\draw[dashed] (a2)-- (al-1);
\draw[-] (3,0) -- (4,0);
\draw[<->] (a1) edge[bend left=50] node[above] {} (al);
\draw[<->] (a2) edge[bend left=60] node[above] {} (al-1);
\node at (7,0) {$\Gamma\cong\mathbb{Z}/2\mathbb{Z}$};
\end{tikzpicture}

\begin{tikzpicture}
\node at (-1,0) {$D_4$};
\node (d1) at (1,0) {$\bullet$};
\node (d2) at (2,0) {$\bullet$};
\node (d3) at (2.5, 0.7) {$\bullet$};
\node (d4) at (2.5,-0.7) {$\bullet$};
\draw[-] (1,0) -- (2,0);
\draw[-] (2,0) -- (2.5,0.7);
\draw[-] (2,0) -- (2.5,-0.7);
\draw[<->] (d1) edge[bend left=50] node[above] {} (d3);
\draw[<->] (d1) edge[bend right=50] node[above] {} (d4);
\draw[<->] (d3) edge[bend left=50] node[above] {} (d4);
\node at (7,0) {$\Gamma\cong S_3$};
\end{tikzpicture}

\begin{tikzpicture}
\node at (-0.5,0) {$D_l\, (l> 4)$};
\node (d1) at (1,0) {$\bullet$};
\node (d2) at (2,0) {$\bullet$};
\node (dl-3) at (3,0) {$\bullet$};
\node (dl-2) at (4,0) {$\bullet$};
\node (dl-1) at (5, 0.5) {$\bullet$};
\node (dl) at (5,-0.5) {$\bullet$};
\draw[-] (1,0) -- (2,0);
\draw[dashed] (d2) -- (dl-3);
\draw[-] (3,0) -- (4,0);
\draw[-] (4,0) -- (5,0.5);
\draw[-] (4,0) -- (5,-0.5);
\draw[<->] (dl-1) edge[bend left=60] node[above] {} (dl);
\node at (7,0) {$\Gamma\cong\mathbb{Z}/2\mathbb{Z}$};
\end{tikzpicture}

\begin{tikzpicture}
\node at (-1,0) {$E_6$};
\node (e1) at (1,0) {$\bullet$};
\node (e2) at (2,0) {$\bullet$};
\node (e3) at (3,0) {$\bullet$};
\node (e4) at (4,0) {$\bullet$};
\node (e5) at (5,0) {$\bullet$};
\node (e6) at (3,1) {$\bullet$};
\draw[-] (1,0) -- (2,0);
\draw[-] (2,0) -- (3,0);
\draw[-] (3,0) -- (4,0);
\draw[-] (4,0) -- (5,0);
\draw[-] (3,0) -- (3,1);
\draw[<->] (e1) edge[bend right=50] node[above] {} (e5);
\draw[<->] (e2) edge[bend right=50] node[above] {} (e4);
\node at (7,0) {$\Gamma\cong\mathbb{Z}/2\mathbb{Z}$};
\end{tikzpicture}
\end{figure}
%%%%%%%%%%%%%%%%%%%%%%%%%%%%%%%%%%%%%%%%%
\subsection{Some basic results}\label{general}
The contents of this section are known for any abstract group. For the sake of completeness, we prove the following results in the context of algebraic groups. The proofs essentially follow the lines of argument used in \cite[Corollary 3.2]{FLT} and \cite[Lemma 2.1, Lemma 2.2]{MS}.
\begin{lemma}\label{inner}
	Let $\varphi\in \Aut(G)$ and $\Int_g$ the inner automorphism defined by   $g\in G$. Then $R(\varphi\circ\Int_g)=R(\varphi)$. In particular, $R(\Int_g)=R(\Id)$, i.e., the number of inner twisted conjugacy classes in $G$ is equal to the number of conjugacy classes in $G$.
\end{lemma}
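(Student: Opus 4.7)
The plan is to write down an explicit bijection between the set $\mathcal{R}(\varphi\circ \Int_g)$ and $\mathcal{R}(\varphi)$. Since this is a purely group-theoretic statement about twisted conjugacy classes (no algebraicity or topology is needed for counting), the argument should reduce to manipulating the defining relation.

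First I would unravel the definition. Two elements $x,y\in G$ satisfy $x\sim_{\varphi\circ \Int_g}y$ precisely when there is some $h\in G$ with
\[
y = hx(\varphi\circ \Int_g)(h)^{-1} = hx\varphi(ghg^{-1})^{-1} = hx\varphi(g)\varphi(h)^{-1}\varphi(g)^{-1}.
\]
Right-multiplying both sides by $\varphi(g)$ converts this to the cleaner form
\[
y\varphi(g) = hx\varphi(g)\varphi(h)^{-1},
\]
which says exactly that $x\varphi(g)\sim_{\varphi} y\varphi(g)$. So the right-translation map $\psi:G\to G$, $x\mapsto x\varphi(g)$, carries the relation $\sim_{\varphi\circ\Int_g}$ to the relation $\sim_\varphi$.

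Next I would observe that $\psi$ is a bijection of $G$ (its inverse is right multiplication by $\varphi(g)^{-1}$) and that the chain of equivalences above shows $x\sim_{\varphi\circ\Int_g}y$ if and only if $\psi(x)\sim_\varphi \psi(y)$. Hence $\psi$ descends to a bijection $\mathcal{R}(\varphi\circ \Int_g)\longrightarrow \mathcal{R}(\varphi)$, and therefore $R(\varphi\circ \Int_g)=R(\varphi)$. For the ``in particular'' part, taking $\varphi=\Id$ immediately gives $R(\Int_g)=R(\Id)$, and since $\Id$-twisted classes are ordinary conjugacy classes, this is the stated conclusion.

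There is essentially no obstacle here: the only slightly non-obvious move is to right-multiply by $\varphi(g)$ (rather than, say, conjugating) to peel off the $\Int_g$ factor, after which everything is formal. The argument is the same as in the abstract group setting of \cite[Corollary 3.2]{FLT} and \cite[Lemma 2.1]{MS}, and it does not use any structural property of $G$ as an algebraic group.
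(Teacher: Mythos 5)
Your proposal is correct and is essentially the same argument as the paper's: both use the right-translation $x\mapsto x\varphi(g)$ to induce the bijection $[x]_{\varphi\circ\Int_g}\mapsto[x\varphi(g)]_{\varphi}$ between $\mathcal{R}(\varphi\circ\Int_g)$ and $\mathcal{R}(\varphi)$. Your explicit ``if and only if'' phrasing makes the well-definedness and injectivity slightly more transparent than the paper's statement that the map ``is bijective as well,'' but the content is identical.
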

\begin{proof}
	Let $x,y\in G$ such that $[x]_{\varphi\circ\Int_g}=[y]_{\varphi\circ\Int_g}$. Then there exists a $z\in G$ such that
	$y=zx(\varphi\circ\Int_g)(z^{-1})=zx\varphi(gz^{-1}g^{-1})=zx\varphi(g)\varphi(z^{-1})\varphi(g^{-1})$. This implies that 
	$y\varphi(g)=zx\varphi(g)\varphi(z^{-1})$, i.e, $[x\varphi(g)]_{\varphi}=[y\varphi(g)]_{\varphi}$. 
	Thus we get a well-defined map \[\widehat{\varphi}: \mathcal{R}(\varphi\circ \Int_g)\longrightarrow \mathcal{R}(\varphi)\] given by  $\widehat{\varphi}([x]_{\varphi\circ \Int_g})=[x\varphi(g)]_{\varphi}$. This is bijective as well. Therefore $R(\varphi\circ\Int_g)=R(\varphi)$. Second part of the lemma follows by taking $\varphi=\Id$.
\end{proof}

\begin{lemma}\label{ses}
	Suppose that $1\longrightarrow N \overset{i}{\longrightarrow} G\overset{\pi}{\longrightarrow} Q\longrightarrow 1$ is  an exact sequence of algebraic groups. Let $\varphi\in\Aut(G)$ be such that $\varphi(N)=N$. Let $\overline{\varphi}$ denote the automorphism of $Q$ induced by $\varphi$. Then
	\begin{enumerate}
		\item \label{qtog} $R(\varphi)\geq R(\overline{\varphi})$.  
		\item \label{gtoq}
		If $N$ is finite and $R(\varphi)=\infty$ then $R(\overline{\varphi})=\infty$.
		\item \label{ntog} If $Q$ is finite and $R(\varphi|_N)=\infty$ then $R(\varphi)=\infty$.
	\end{enumerate}	
\end{lemma}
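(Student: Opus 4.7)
The overall plan is to construct, for each part, a natural map between the relevant Reidemeister sets induced by the exact sequence, and then to analyze its surjectivity or the size of its fibers; the arguments are essentially group-theoretic, as the preamble signals. For (\ref{qtog}), I would define $\widehat{\pi}\colon \mathcal{R}(\varphi)\longrightarrow \mathcal{R}(\overline{\varphi})$ by $\widehat{\pi}([x]_\varphi) = [\pi(x)]_{\overline{\varphi}}$. Well-definedness is immediate from applying $\pi$ to the relation $y=gx\varphi(g)^{-1}$, and surjectivity follows from the surjectivity of $\pi$, since any $q\in Q$ lifts to some $x\in G$ with $\widehat{\pi}([x]_\varphi)=[q]_{\overline{\varphi}}$. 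Hence $R(\varphi)\geq R(\overline{\varphi})$. For (\ref{gtoq}), one bounds the fibers of $\widehat{\pi}$: after replacing $x$ by a suitable twisted conjugate (using a lift of the element of $Q$ that moves $\pi(x)$ to $\pi(y)$) and then absorbing via an inner twisted conjugation by $g^{-1}$, each class in the fiber over $[\pi(x)]_{\overline{\varphi}}$ is seen to admit a representative of the form $xn$ with $n\in N$. The fiber therefore has at most $|N|$ elements, yielding $R(\varphi)\leq |N|\cdot R(\overline{\varphi})$, and the contrapositive gives (\ref{gtoq}).

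For (\ref{ntog}) the direction reverses. I would define $\iota^{\ast}\colon \mathcal{R}(\varphi|_N)\longrightarrow \mathcal{R}(\varphi)$ by $\iota^{\ast}([x]_{\varphi|_N}) = [x]_\varphi$, which is well-defined since an $N$-twisted conjugation is automatically a $G$-twisted conjugation. To bound its fibers, suppose $x_1,x_2\in N$ and $x_2=gx_1\varphi(g)^{-1}$ for some $g\in G$. Applying $\pi$ and using $\pi(x_1)=\pi(x_2)=e_Q$ forces $\pi(g)=\overline{\varphi}(\pi(g))$, i.e., $\pi(g)\in Q^{\overline{\varphi}}$, the $\overline{\varphi}$-fixed subgroup of $Q$. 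Setting $H:=\pi^{-1}(Q^{\overline{\varphi}})$, the intersection $[x_1]_\varphi\cap N$ coincides with the $H$-orbit of $x_1$ under twisted conjugation, and a direct computation shows that this $H$-action descends to $\mathcal{R}(\varphi|_N)$ with the normal subgroup $N\leq H$ acting trivially. Hence the fiber of $\iota^{\ast}$ over $[x_1]_\varphi$ is an orbit of $H/N\cong Q^{\overline{\varphi}}$, of size at most $|Q|$. This gives $R(\varphi|_N)\leq |Q|\cdot R(\varphi)$; finiteness of $Q$ together with $R(\varphi|_N)=\infty$ then forces $R(\varphi)=\infty$.

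The main obstacle is the fiber analysis in part (\ref{ntog}): one must identify which elements of $G$ can merge two $\varphi|_N$-classes when passing to $G$, and see that this is controlled by the fixed subgroup $Q^{\overline{\varphi}}$ rather than by all of $Q$. The other two parts reduce to routine verifications once the map $\widehat{\pi}$ has been written down.
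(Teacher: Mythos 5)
Your proposal is correct and follows essentially the same route as the paper: both construct the natural maps $\mathcal{R}(\varphi)\to\mathcal{R}(\overline{\varphi})$ and $\mathcal{R}(\varphi|_N)\to\mathcal{R}(\varphi)$ and show their fibers are finite (of size at most $|N|$ and $|Q|$ respectively), the only differences being presentational — the paper runs (2) and (3) by contradiction with a pigeonhole on cosets of $N$, while you bound the fibers directly and, in (3), make the slightly sharper observation that the merging is governed by an orbit of $Q^{\overline{\varphi}}$ rather than all of $Q$.
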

\begin{proof}
	(1) Since $\varphi\in \Aut(G)$ and $\varphi(N)=N$, then the restriction of $\varphi$ to $N$, denoted by $\varphi|_{N}$, is an automorphism of $N$. Clearly, $\varphi$ induces an automorphism of $Q\cong G/N$, denoted by $\overline{\varphi}$ such that the  following diagram commutes:
	\[\xymatrix{
		1\ar[r] & N\ar[r]^{i}\ar[d]_{\varphi|_{N}}& G\ar[r]^{\pi} \ar[d]^{\varphi}& Q\ar[r]\ar[d]^{\overline{\varphi}}&1 \\
		1\ar[r] & N\ar[r]^{i}& G\ar[r]^{\pi}& Q\ar[r]&1 
	}.\] In particular, we have $\overline{\varphi}\circ \pi=\pi\circ \varphi$. Now, observe that $\pi$ induces a surjective map $\widehat{\pi}:\mathcal{R}(\varphi)\longrightarrow\mathcal{R}(\overline{\varphi})$ given by $\widehat{\pi}([x]_{\varphi})=[\pi(x)]_{\overline{\varphi}}$ for all $x\in G$. Therefore $R(\varphi)\geq R(\overline{\varphi})$. 
	
	(2) If possible suppose that not all fibres of $\widehat{\pi}$ are finite. Let $x_r\in G$ ($r\geq 0$) be such that $\widehat{\pi}([x_r]_{\varphi})=\widehat{\pi}([x_0]_{\varphi})$ for all $r\geq 1$ but $[x_r]_{\varphi}\neq [x_s]_{\varphi}$ for all $r\neq s$ (this is possible bacause $R(\varphi)=\infty$). Then for each $r\geq 1$, we have 
	\[\pi(x_0)=\pi(g_r)\pi(x_r)\overline{\varphi}(\pi(g_r))^{-1}=\pi(g_r)\pi(x_r)\pi(\varphi(g_r))^{-1}=\pi(g_rx_r\varphi(g_r)^{-1}),\]
	for some $g_r\in G$. Thus, for each $r\geq 1$ there exists $n_r\in N$ such that $x_0n_r=g_rx_r\varphi(g_r)^{-1}$. Hence $[x_0n_r]_{\varphi}=[x_r]_{\varphi}$ for all $r$, which is a contradiction, since $N$ is finite and $[x_r]_{\varphi}\neq [x_s]_{\varphi}$ for $r\neq s$. Hence all the fibres of $\widehat{\pi}$ are finite whenever $N$ is finite. Therefore, by the previous part of this lemma and our assumption that $R({\varphi})=\infty$, we have $R(\overline{\varphi})=\infty$.
	
	(3) If possible suppose that $R(\varphi)<\infty$. Since $R(\varphi|_{N})=\infty$ then there exist infinitely many elements $n_r\in N$ ($r\geq 0$) such that $[n_r]_{\varphi|_{N}}\neq [n_s]_{\varphi|_{N}}$ ($r\neq s$) but $[n_r]_{\varphi}=[n_0]_{\varphi}$ for all $r\geq 0$. So for all $r\geq 1$ there exists $x_r\in G$ such that $n_r=x_rn_0\varphi(x_r^{-1})$. 
	Again since $Q(\cong G/N)$ is finite then there exist distinct $r,s\in \mathbb{N}$ such that $x_sx_r^{-1}\in N$. Therefore $$n_s=x_sn_0\varphi(x_s^{-1})=x_sx_r^{-1}n_r\varphi(x_r)\varphi(x_s^{-1})=(x_sx_r^{-1})n_r\varphi(x_sx_r^{-1})^{-1}.$$ Hence $[n_r]_{\varphi|_N}=[n_s]_{\varphi|_N}$, which is a contradiction. 
	
	This completes the proof.
\end{proof}

%%%%%%%%%%%%%%%%%%%%%%%%%%%%%%%%%%%%%%%%%%%%%%%
\section{Proof of the main result}\label{mainsec}
The following results will be used in the proof of our main theorem.  

\begin{lemma}\label{autprod}
Suppose that $G$ is a simple algebraic group. Then 
\[\Aut(G^n)\cong (\Aut(G))^n\rtimes S_n.\]
\end{lemma}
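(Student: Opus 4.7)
The plan is to construct the obvious homomorphism
\[\Psi : (\Aut(G))^n \rtimes S_n \longrightarrow \Aut(G^n), \quad \Psi\bigl((\varphi_1,\dots,\varphi_n),\sigma\bigr)(g_1,\dots,g_n) = \bigl(\varphi_1(g_{\sigma^{-1}(1)}),\dots,\varphi_n(g_{\sigma^{-1}(n)})\bigr),\]
where $S_n$ acts on $(\Aut(G))^n$ by permuting coordinates, and then to verify that it is an isomorphism of algebraic groups. That $\Psi$ is a well-defined group homomorphism is a routine semidirect-product calculation. Injectivity is also easy: evaluating $\Psi((\varphi_1,\dots,\varphi_n),\sigma) = \Id$ on elements of the form $(e,\dots,e,g,e,\dots,e)$ forces $\sigma = \Id$ (since $G$ is non-trivial) and then $\varphi_i = \Id$ for every $i$.

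The main content is surjectivity. The key step is the following \emph{permutation lemma}: for each $i$, let $G_i \le G^n$ denote the $i$-th factor embedded as $\{e\}\times\cdots\times G\times\cdots\times\{e\}$. Then $\{G_1,\dots,G_n\}$ is precisely the set of minimal non-trivial closed connected normal subgroups of $G^n$, so any $\varphi \in \Aut(G^n)$ must permute them. Granting this, we obtain $\sigma\in S_n$ with $\varphi(G_i)=G_{\sigma(i)}$; composing with the image of $\sigma^{-1}$ under $\Psi$ reduces to the case where $\varphi$ fixes every $G_i$ setwise. Then $\varphi|_{G_i}$ defines an element $\varphi_i \in \Aut(G_i)\cong \Aut(G)$, and since every $(g_1,\dots,g_n)\in G^n$ factors uniquely as a product of elements in the $G_i$'s (which pairwise commute), we recover $\varphi = \Psi((\varphi_1,\dots,\varphi_n),\Id)$.

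The hard part, and the place where the hypothesis that $G$ is simple (and in particular non-commutative) is actually used, is the permutation lemma. I would prove it as follows. Let $H$ be a minimal non-trivial closed connected normal subgroup of $G^n$, and look at the projections $\pi_i(H) \le G$. Each is closed, connected, and normal in $G$, hence either trivial or all of $G$. Fix an index $i$ with $\pi_i(H)=G$ and consider $[H,G_i]$; it is a closed connected subgroup contained in both $H$ and $G_i$ (both being normal), and a direct computation using coordinates shows that, after identifying $G_i$ with $G$, it equals $[\pi_i(H),G] = [G,G]$. Since $G$ is non-commutative simple, the commutator subgroup $[G,G]$ is a non-trivial closed connected normal subgroup of $G$, hence must be all of $G$; so $[H,G_i]=G_i \subseteq H$. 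By minimality of $H$ this forces $H=G_i$, and in particular $\pi_j(H)=\{e\}$ for $j\neq i$. This identifies the minimal closed connected non-trivial normal subgroups of $G^n$ exactly as the $G_i$'s, completing the surjectivity argument and hence the lemma.
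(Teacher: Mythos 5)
Your proof is correct and follows essentially the same route as the paper: both arguments rest on the observation that any automorphism of $G^n$ must permute the factors $G_i$, identified as the minimal non-trivial closed connected normal subgroups, and then use this to split off the map to $S_n$ with kernel $(\Aut(G))^n$. The only real difference is that the paper simply cites Humphreys (Theorem 27.5) for this permutation property, whereas you prove it from scratch via the commutator argument $[H,G_i]=[G,G]=G_i$; that argument is sound (images of homomorphisms and commutators of closed subgroups, one of them connected, are closed and connected), so your version is just a self-contained rendering of the same proof.
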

\begin{proof}
Let $G^n=G_1\times G_2\times \cdots \times G_n$, where $G_i=G$ and set $H_i=(1\times \cdots \times G_i\times \cdots \times 1)$ for $i\in I:=\{1,...,n\}$. Then $G^n$ is semisimple and $H_1,...,H_n$ are its simple components (i.e., minimal closed connected normal subgroups of $G$ of positive dimension). If $\varphi\in \Aut(G^n)$ then for every $i\in I,$  $\varphi(H_i)$ is also a simple component and hence there exists $k_i\in I$ such that $\varphi(H_i)=H_{k_i}$ (by Theorem 27.5 of \cite{hum}). Therefore, $\varphi$ gives rise to a bijection $\pi(\varphi):I\longrightarrow I$ defined by $\pi(\varphi)(i)=k_i$ for all $i\in I$. Hence, we have a homomorphism $\pi:\Aut(G^n)\longrightarrow S_n$ such that $\ker(\pi)=\Aut(G)^n$, where $\Aut(G)^n$ is identified with a subgroup of  $ \Aut(G^n)$ via
\[(\varphi_1, \ldots, \varphi_n)\mapsto \varphi_{(1, \ldots, n)}((x_1, \ldots, x_n)\mapsto (\varphi_1(x_1), \ldots, \varphi_n(x_n)).\]

\noindent Now, for every $\gamma\in S_n$, define $\overline{\gamma}:G^n\longrightarrow G^n$ by $\overline{\gamma}(x_1, \ldots, x_n)=(x_{\gamma^{-1}(1)}, \ldots, x_{\gamma^{-1}(n)})$ for all $(x_1,...,x_n)\in G^n$ and observe that $\gamma\mapsto \overline{\gamma}$ defines an injective homomorphism $S_n \hookrightarrow \Aut(G^n)$. Observe that $\overline{\gamma}(H_i)=H_{\gamma(i)}$ and hence $\pi(\overline{\gamma})(i)=\gamma(i)$ for all $i\in I, \gamma\in S_n$.
This shows that $\pi$ is surjective and the exact sequence $$1\longrightarrow \Aut(G)^n\longrightarrow \Aut(G^n)\overset{\pi}\longrightarrow S_n\longrightarrow 1$$ splits on the right. Hence $\Aut(G^n)\cong \Aut(G)^n\rtimes S_n$ as desired.
\end{proof}
We get the following characterization of the $R_{\infty}$-property using the above lemma. 
\begin{lemma}\label{productgn}
Suppose that a simple algebraic group $G$  possesses the $R_{\infty}$-property. Then $G^n$ also possesses the $R_{\infty}$-property.
\end{lemma}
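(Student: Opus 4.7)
The plan is to apply Lemma \ref{autprod} to write an arbitrary $\varphi\in\Aut(G^n)$ as $\varphi=(\varphi_1,\dots,\varphi_n)\circ\overline{\gamma}$ with each $\varphi_i\in\Aut(G)$ and $\gamma\in S_n$, and then to reduce the computation of $R(\varphi)$ to the cycle decomposition of $\gamma$.

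First I would decompose $\gamma$ into disjoint cycles supported on index sets $I_1,\dots,I_r\subseteq\{1,\dots,n\}$. Since each $\varphi_i$ acts only on the $i$-th factor and $\overline{\gamma}$ permutes factors within each cycle, the subgroup $G_{I_j}:=\prod_{i\in I_j}G$ is $\varphi$-stable and $\varphi$ splits as a direct product of automorphisms $\varphi^{(j)}\in\Aut(G_{I_j})$ under the decomposition $G^n=G_{I_1}\times\cdots\times G_{I_r}$. A routine verification shows that for such a direct-product automorphism one has a bijection $\mathcal{R}(\varphi)\longleftrightarrow\prod_{j}\mathcal{R}(\varphi^{(j)})$, so it suffices to exhibit a single $j$ with $R(\varphi^{(j)})=\infty$. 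This reduces the problem to the case in which $\gamma$ is a single $k$-cycle, which after relabelling we may take to be $(1\,2\,\cdots\,k)$ on $\{1,\dots,k\}$.

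The heart of the argument is then a normalization inside $G^k$. Writing out $gx\varphi(g)^{-1}$ coordinate-wise yields $g_1x_1\varphi_1(g_k)^{-1}$ in position $1$ and $g_jx_j\varphi_j(g_{j-1})^{-1}$ in position $j$ for $2\le j\le k$. Solving $g_jx_j\varphi_j(g_{j-1})^{-1}=e$ recursively for $j=2,\dots,k$ determines $g_2,\dots,g_k$ uniquely in terms of the free parameter $g_1$ and the data $x_2,\dots,x_k$. Substituting the resulting expression for $g_k$ into the first coordinate shows that every $x\in G^k$ is $\varphi$-twisted conjugate to an element of the form $(\tilde x_1,e,\dots,e)$, and that two such normalized elements are $\varphi$-twisted conjugate if and only if their first coordinates $\tilde x_1,\tilde y_1\in G$ satisfy
\[\tilde y_1=g_1\,\tilde x_1\,\psi(g_1)^{-1}\quad\text{for some }g_1\in G,\qquad \psi:=\varphi_1\circ\varphi_k\circ\varphi_{k-1}\circ\cdots\circ\varphi_2\in\Aut(G).\]
This sets up a bijection $\mathcal{R}(\varphi)\longleftrightarrow\mathcal{R}(\psi)$, and since $G$ has the $R_\infty$-property by hypothesis, $R(\psi)=\infty$, whence $R(\varphi)=\infty$.

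The main obstacle is the bookkeeping in the normalization step: one must track carefully how the recursive elimination $g_j=\varphi_j(g_{j-1})x_j^{-1}$ propagates through to $g_k$ so as to produce precisely the composite $\psi$ applied to the free parameter $g_1$, together with a constant factor depending only on $x_2,\dots,x_k$. Once this identification is correctly verified, the equivalence of the $\varphi$-classes in $G^k$ with the $\psi$-classes in $G$ is immediate and the conclusion follows formally from the hypothesis on $G$.
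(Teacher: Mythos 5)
Your proposal is correct and rests on the same two ingredients as the paper's proof: Lemma~\ref{autprod} to split $\varphi$ into $(\varphi_1,\dots,\varphi_n;\overline{\sigma})$, and the recursive elimination of the coordinates along a cycle of $\sigma$, which produces exactly the composite $\psi=\varphi_1\varphi_{\sigma^{-1}(1)}\cdots\varphi_{\sigma^{-(r-1)}(1)}$ appearing in the paper. The only difference is one of economy: you establish the full bijection $\mathcal{R}(\varphi)\leftrightarrow\prod_j\mathcal{R}(\psi_j)$ over all cycles, whereas the paper works only with the cycle through the index $1$ and shows merely that representatives of distinct $\psi$-classes of $G$, placed in the first coordinate, land in distinct $\varphi$-classes of $G^n$ --- which already forces $R(\varphi)=\infty$.
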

\begin{proof}
Let $\varphi\in \Aut(G^n)$. Then by \lemref{autprod}, we have $\varphi=(\varphi_1, \varphi_2, \ldots, \varphi_n; \overline{\sigma})$, where $\varphi_i\in \Aut(G)$ for all $i=1, 2, \ldots, n$ and $\sigma\in S_n$. If $\sigma^{-r}(1)=1$ for some smallest positive integer $r$, then choose $a_i$'s from distinct $\varphi_1\varphi_{\sigma^{-1}(1)}\cdots \varphi_{\sigma^{-(r-1)}(1)}$-twisted conjugacy classes in $G$ ($i\in\mathbb{N}$). 

\textbf{Claim:} $R(\varphi)=\infty$. 

If possible let $R(\varphi)<\infty$. Then without loss of generality we may assume that $(a_i, 1, \ldots, 1)\sim_{\varphi} (a_1, 1, \ldots, 1)$ in $G^n$ for $i=2, 3, \ldots$. Then for some fixed $i$ there exists $\widetilde{g}_i\in G^n$ such that 
\begin{align}\label{eq1}
(a_1, 1, \ldots, 1)=\widetilde{g}_i(a_i, 1, \ldots, 1)\varphi(\widetilde{g}_i)^{-1}
\end{align}
where $\widetilde{g}_i=(g_1, g_2, \ldots, g_n)\in G^n$. Now
\begin{align}\label{eq2}
\varphi(\widetilde{g}_i)=(\varphi_1, \varphi_2, \ldots, \varphi_n; \overline{\sigma})(g_1, g_2, \ldots, g_n)=(\varphi_1(g_{\sigma^{-1}(1)}), \varphi_2(g_{\sigma^{-1}(2)}), \ldots, \varphi_n(g_{\sigma^{-1}(n)})). 
\end{align}
Therefore from Equations \eqref{eq1} and \eqref{eq2}, we get 
\begin{align*}
(a_1, 1, \ldots, 1)=(g_1, g_2, \ldots, g_n)(a_i, 1, \ldots, 1)(\varphi_1(g_{\sigma^{-1}(1)}^{-1}), \varphi_2(g_{\sigma^{-1}(2)}^{-1}), \ldots, \varphi_n(g_{\sigma^{-1}(n)}^{-1})).
\end{align*}
This implies 
\begin{align*}
a_1&=g_1a_i\varphi_1(g_{\sigma^{-1}(1)}^{-1}),\\ 
g_j^{-1}&=\varphi_j(g_{\sigma^{-1}(j)}^{-1})
\end{align*}
for $j=2, 3, \ldots, n$. Therefore, from the above two equations, we get 
\begin{align*}
a_1&=g_1a_i\varphi_1(g_{\sigma^{-1}(1)}^{-1})=g_1a_i\varphi_1\varphi_{\sigma^{-1}(1)}(g_{\sigma^{-2}(1)}^{-1})\\
&\vdots\\
&=g_1a_i(\varphi_1\varphi_{\sigma^{-1}(1)}\cdots \varphi_{\sigma^{-(r-1)}(1)})(g_{\sigma^{-r}(1)}^{-1})\\
&=g_1a_i(\varphi_1\varphi_{\sigma^{-1}(1)}\cdots \varphi_{\sigma^{-(r-1)}(1)})(g_1^{-1}).
\end{align*}
Which is a contradiction to the fact that $a_i$'s are in different $\varphi_1\varphi_{\sigma^{-1}(1)}\cdots \varphi_{\sigma^{-(r-1)}(1)}$-twisted conjugacy classes in $G$. Therefore $R(\varphi)=\infty$. 
\end{proof}

\begin{notation}\label{not}
Let $G=G(\pi, k)$ be a Chevalley group of type $\Phi$ over $k$.
For each $\alpha\in \Phi$ set  
\begin{align*}
x_{\alpha}(t)&=\exp(t\pi(e_{\alpha})) \quad t\in k\\
X_{\alpha}&=\langle x_{\alpha}(t)\mid t\in k\rangle\\
n_{\alpha}(t)&=x_{\alpha}(t)x_{-\alpha}(-t^{-1})x_{\alpha}(t) \quad t\in k^{\times} \\
h_{\alpha}(t)&=n_{\alpha}(t)n_{\alpha}(-1) \quad t\in k^{\times}.
\end{align*}  
Then $G=\langle x_{\alpha}(t)\mid\alpha\in \Phi, t\in k\rangle$. Viewing $G$  as a semisimple linear algebraic group over $k$, we observe that $T=\langle h_{\alpha}(t)\mid\alpha\in \Phi, t\in k^{\times}\rangle$ is a maximal torus of $G$ and $B=T\ltimes U$ is a Borel subgroup of $G$, where $U=\langle x_{\alpha}(t)\mid \alpha\in \Phi^+, t\in k\rangle$ and $\Phi^+$ a positive subsystem. With the above notation we state 

\begin{lemma}\cite[Corollary 6 (p. 31), Corollary 2 (p. 41)]{St2}\label{onedim}
 For every $\alpha\in \Phi$ there exists a homomorphism of algebraic groups $\varphi_\alpha:\SL_2(k)\longrightarrow \langle X_\alpha,X_{-\alpha}\rangle$ such that $\varphi_\alpha\begin{pmatrix}
1&t\\0&1
\end{pmatrix}=x_\alpha(t)$, $\varphi_\alpha\begin{pmatrix}
1&0\\t&1\end{pmatrix}=x_{-\alpha}(t)$ (for all $t\in k$), $\varphi_\alpha\begin{pmatrix}
0&1\\-1&0
\end{pmatrix}=n_\alpha(1)$, and $\varphi_\alpha\begin{pmatrix}
t&0\\0&t^{-1}
\end{pmatrix}=h_\alpha(t)$ (for all $t\in k^\times$), Moreover, $\ker(\varphi_\alpha)=\{1\}$ or $\{\pm 1\}$,  i.e., $\langle X_\alpha,X_{-\alpha}\rangle$ is either $\SL_2(k)$ or $\mathrm{PSL}_2(k)$.
\end{lemma}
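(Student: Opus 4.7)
The plan is to realize $\langle X_\alpha,X_{-\alpha}\rangle$ itself as a Chevalley group of type $A_1$ over $k$, and then deduce the lemma from the structural results recalled in \secref{agcg}. First I would invoke the standard fact that for any $\alpha\in\Phi$ the elements $e_\alpha,h_\alpha,e_{-\alpha}$ span a Lie subalgebra $\mathfrak{s}_\alpha\subset\mathcal{L}$ isomorphic to $\mathfrak{sl}_2(\mathbb{C})$, with $e_\alpha,e_{-\alpha},h_\alpha$ playing the roles of the standard $\mathfrak{sl}_2$-triple $E,F,H$. The restriction $\pi_\alpha:=\pi|_{\mathfrak{s}_\alpha}$ is a finite dimensional representation of $\mathfrak{sl}_2$ on $V$.

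Next I would redo the Chevalley construction of \secref{agcg} with $\mathfrak{s}_\alpha$ and $\pi_\alpha$ in place of $\mathcal{L}$ and $\pi$: the same lattice $M\subset V$ is stable under $\pi(e_\alpha)^n/n!$, so the formal power series $\sum_{n\ge 0}t^n\pi_\alpha(e_{\pm\alpha})^n/n!$ defines automorphisms of $V_k=M\otimes_\mathbb{Z} k$ that coincide with $x_{\pm\alpha}(t)$ by definition of the latter. Hence $\langle X_\alpha,X_{-\alpha}\rangle$ is exactly the Chevalley group $G(\pi_\alpha,k)$ of type $A_1$ attached to $\pi_\alpha$. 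For $\mathfrak{sl}_2$ the root lattice $L_0$ has index $2$ in the weight lattice $L_1$, so the universal type Chevalley group of type $A_1$ is $\SL_2(k)$ and the adjoint type is $\mathrm{PSL}_2(k)$. The comparison morphism recalled at the end of \secref{agcg} therefore supplies a surjective homomorphism of algebraic groups $\varphi_\alpha\colon \SL_2(k)\longrightarrow G(\pi_\alpha,k)=\langle X_\alpha,X_{-\alpha}\rangle$ determined by $\exp(tE)\mapsto x_\alpha(t)$ and $\exp(tF)\mapsto x_{-\alpha}(t)$.

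The four listed matrix identifications are then automatic. Indeed $\exp(tE)$ and $\exp(tF)$ in $\SL_2(k)$ are precisely the unipotent matrices $\begin{pmatrix}1&t\\0&1\end{pmatrix}$ and $\begin{pmatrix}1&0\\t&1\end{pmatrix}$, while the elements $n_\alpha(1)$ and $h_\alpha(t)$ are defined in Notation~\ref{not} by the same words in $x_{\pm\alpha}(\cdot)$ as $\begin{pmatrix}0&1\\-1&0\end{pmatrix}$ and $\begin{pmatrix}t&0\\0&t^{-1}\end{pmatrix}$ are in the corresponding unipotent matrices of $\SL_2(k)$; a short matrix computation confirms the two words agree.

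The main obstacle is the statement about $\ker(\varphi_\alpha)$. Since the only proper normal closed subgroups of $\SL_2(k)$ are central, $\ker(\varphi_\alpha)\subseteq \{\pm I\}$, giving the dichotomy $\langle X_\alpha,X_{-\alpha}\rangle\cong \SL_2(k)$ or $\mathrm{PSL}_2(k)$. Concretely, $-I=h_\alpha(-1)\in\SL_2(k)$ acts on the $\mu$-weight space of $\pi$ by $(-1)^{\langle\mu,\alpha^\vee\rangle}$, so $-I\in\ker(\varphi_\alpha)$ precisely when every weight $\mu$ of $\pi$ satisfies $\langle\mu,\alpha^\vee\rangle\in 2\mathbb{Z}$, i.e., when $L_{\pi_\alpha}\subseteq L_0$ as lattices for $\mathfrak{sl}_2$; otherwise $\varphi_\alpha$ is an isomorphism. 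This completes the plan.
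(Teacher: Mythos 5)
The paper offers no proof of this lemma---it is quoted directly from Steinberg's \emph{Lectures on Chevalley Groups}---and your argument is a correct reconstruction of the standard proof given there: restrict $\pi$ to the $\mathfrak{sl}_2$-triple $\{e_\alpha,h_\alpha,e_{-\alpha}\}$, recognize $\langle X_\alpha,X_{-\alpha}\rangle$ as the rank-one Chevalley group $G(\pi_\alpha,k)$, and obtain $\varphi_\alpha$ from the comparison morphism out of the universal type $A_1$ group $\SL_2(k)$, with kernel central because every proper closed normal subgroup of $\SL_2(k)$ is central. The matrix identities for $n_\alpha(1)$ and $h_\alpha(t)$ and the weight computation showing $-I\in\ker(\varphi_\alpha)$ iff $\langle\mu,\alpha^\vee\rangle\in 2\mathbb{Z}$ for all weights $\mu$ of $\pi$ are both correct.
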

\begin{lemma}\cite[p. 29, Corollary (a) to Lemma 28]{St2}\label{onedim1}
 If $G$ is of universal type (i.e., $L_\pi=L_1$) then every element $h$ of $T$ can be uniquely written as $h=\prod\limits_{i=1}^nh_{\alpha_i}(t_i),$ $t_i\in k^\times$, where $\Delta=\{\alpha_1,\ldots, \alpha_n\}$ is a simple subsystem of $\Phi$, determined by $B$. 
 \end{lemma}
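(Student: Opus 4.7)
The plan is to establish that the multiplication morphism
\[
\phi : (k^{\times})^{n} \longrightarrow T, \qquad (t_{1},\ldots,t_{n}) \mapsto h_{\alpha_{1}}(t_{1})\cdots h_{\alpha_{n}}(t_{n}),
\]
is an isomorphism of algebraic groups; this simultaneously yields existence and uniqueness of the claimed expression. Before proving bijectivity, I would first verify two standing facts coming out of the Chevalley construction: $T$ is abelian, and for each $\alpha\in\Phi$ the map $t\mapsto h_{\alpha}(t)$ is a homomorphism $k^{\times}\to T$. Both are standard consequences of the commutator/multiplicative relations built into $G(\pi,k)$ and the definition $h_{\alpha}(t)=n_{\alpha}(t)n_{\alpha}(-1)$; granting them, $\phi$ is a morphism of algebraic groups.

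For existence (surjectivity of $\phi$), recall that $T$ is generated by $\{h_{\beta}(t):\beta\in\Phi,\ t\in k^{\times}\}$, so it suffices to show that each $h_{\beta}(t)$ lies in the image of $\phi$. The coroot $\beta^{\vee}$ decomposes in the coroot lattice as $\beta^{\vee}=\sum_{i=1}^{n}c_{i}\alpha_{i}^{\vee}$ with $c_{i}\in\mathbb{Z}$. Using the weight space decomposition $V_{k}=\bigoplus_{\mu}V_{\mu}$ under $T$, together with the fact that $h_{\alpha}(s)$ acts on $V_{\mu}$ as multiplication by $s^{\langle\mu,\alpha^{\vee}\rangle}$, one checks that $h_{\beta}(t)$ and $\prod_{i}h_{\alpha_{i}}(t^{c_{i}})$ act identically on every weight space. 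Since $\pi$ is faithful, this forces equality in $G$, so $h_{\beta}(t)\in\mathrm{Im}(\phi)$.

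For uniqueness (injectivity of $\phi$), the hypothesis $L_{\pi}=L_{1}$ becomes crucial. The full weight lattice $L_{1}$ contains the fundamental weights $\omega_{1},\ldots,\omega_{n}$, characterized by $\langle\omega_{j},\alpha_{i}^{\vee}\rangle=\delta_{ij}$. Each $\omega_{j}$ arises as a weight in $\pi$, so it defines a character $\omega_{j}:T\to k^{\times}$ satisfying $\omega_{j}(h_{\alpha_{i}}(t))=t^{\delta_{ij}}$. If $\phi(t_{1},\ldots,t_{n})=1$, then applying $\omega_{j}$ gives $t_{j}=1$ for every $j$, proving injectivity.

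The main obstacle is the surjectivity step, where one must justify the explicit formula $h_{\beta}(t)=\prod_{i}h_{\alpha_{i}}(t^{c_{i}})$ with $c_{i}$ the coefficients of $\beta^{\vee}$ in the basis $\{\alpha_{i}^{\vee}\}$. This depends on careful bookkeeping of how $T$ acts on the $\mathcal{U}_{\mathbb{Z}}$-invariant lattice $M$ and its reduction $V_{k}$, and on invoking faithfulness of $\pi$ to pass from equality of linear operators on each $V_{\mu}$ to equality in $G$. The uniqueness half, by contrast, is essentially a one-line computation once the hypothesis $L_{\pi}=L_{1}$ is used to produce the fundamental-weight characters of $T$; without that hypothesis (e.g.\ in the adjoint case, where $L_{\pi}=L_{0}$), some $t_{j}$ might be a nontrivial root of unity and uniqueness would genuinely fail.
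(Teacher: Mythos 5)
The paper itself offers no proof of this lemma --- it is quoted directly from Steinberg's \emph{Lectures on Chevalley groups} --- and your argument is essentially the proof given in that source: surjectivity of $(t_1,\ldots,t_n)\mapsto\prod h_{\alpha_i}(t_i)$ via the diagonal action of each $h_\beta(t)$ on the weight spaces of $V_k$ (the scalar $t^{\langle\mu,\beta^\vee\rangle}$ on $V_\mu$) together with the expansion of $\beta^\vee$ in simple coroots, and injectivity via the fundamental-weight characters that the hypothesis $L_\pi=L_1$ makes available. One repair is needed in the injectivity step: it is not true in general that each fundamental weight $\omega_j$ \emph{is} a weight of $\pi$ (for $\mathrm{SL}_3$ with the standard representation, $\omega_2$ is not), only that $\omega_j$ lies in the lattice $L_\pi$ generated by the weights; the character $\omega_j\colon T\to k^\times$ with $\omega_j(h_{\alpha_i}(t))=t^{\delta_{ij}}$ must therefore be obtained by extending the assignment $\mu\mapsto(\text{scalar of }h\text{ on }V_\mu)$ multiplicatively from the actual weights of $\pi$ to all of $L_\pi=L_1$. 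Likewise, the passage from equality of operators on $V_k$ to equality in $G$ rests not on faithfulness of $\pi$ per se but on the fact that $G(\pi,k)$ is by construction a subgroup of $\GL(V_k)$. With these adjustments your proof is correct and matches the cited one.
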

Let $\Gamma$ be as in \secref{automorphisms} and $\rho\in \Gamma$. Then by Theorem 29 of \cite[p. 90]{St2}, there exists an abstract automorphism $\overline{\rho}:G\longrightarrow G$ such that $\overline{\rho}(x_{\alpha}(t))=x_{\rho(\alpha)}(\epsilon_{\alpha}t)$ for all $\alpha\in \Phi, t\in k$, where $\epsilon_\alpha$ is a sign depending on $\alpha\in \Phi$. Furthermore, $\epsilon_{\alpha}=1$ if $\alpha$ or $-\alpha$ is a simple root.
\end{notation}
\begin{lemma}\label{algebraic}
For every $\rho\in \Gamma$, the map $\overline{\rho}$  is an algebraic group automorphism of $G$. Moreover, $\overline{\rho}\in D$ (where $D$ is as in \secref{automorphisms}).
\end{lemma}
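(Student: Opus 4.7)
The plan is to establish that $\overline{\rho}$ preserves the pair $(B,T)$, exhibit algebraicity on the big cell $U^-TU$, and then propagate algebraicity to all of $G$ by translation. First, since $\rho\in\Gamma$ stabilizes $\Delta$ it permutes $\Phi^+$, and so for every $\alpha\in\Phi^+$ one has $\overline{\rho}(x_\alpha(t))=x_{\rho(\alpha)}(\epsilon_\alpha t)\in U$; as $U$ is generated by $\{x_\alpha(t):\alpha\in\Phi^+,\,t\in k\}$ it follows that $\overline{\rho}(U)\subset U$, with equality since $\overline{\rho}$ is an abstract automorphism. The same argument yields $\overline{\rho}(U^-)=U^-$.

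Next, to see $\overline{\rho}(T)=T$, I would focus on simple roots $\alpha\in\Delta$: both $\alpha$ and $-\alpha$ then satisfy the hypothesis forcing $\epsilon_\alpha=\epsilon_{-\alpha}=1$. Expanding $h_\alpha(t)=x_\alpha(t)x_{-\alpha}(-t^{-1})x_\alpha(t)\cdot x_\alpha(-1)x_{-\alpha}(1)x_\alpha(-1)$ and applying $\overline{\rho}$ factor-by-factor gives $\overline{\rho}(h_\alpha(t))=h_{\rho(\alpha)}(t)\in T$. Since $T$ is generated (as a group) by $\{h_\alpha(t):\alpha\in\Delta,\,t\in k^\times\}$, this establishes $\overline{\rho}(T)\subset T$, hence equality. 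Combined with the $U$-invariance this forces $\overline{\rho}(B)=B$, so that $\overline{\rho}\in D$ once algebraicity is known.

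To prove algebraicity, I would invoke the big cell $\Omega:=U^-TU$, which is Zariski-open in $G$, and the fact that the product map $U^-\times T\times U\to \Omega$ is an isomorphism of varieties. On $U$ (and analogously $U^-$), fix an ordering $\beta_1,\ldots,\beta_N$ of $\Phi^+$; the map $(c_1,\ldots,c_N)\mapsto \prod_i x_{\beta_i}(c_i)$ is an isomorphism $\mathbb{A}^N\xrightarrow{\sim}U$, and $\overline{\rho}$ sends such a product to $\prod_i x_{\rho(\beta_i)}(\epsilon_{\beta_i}c_i)$, manifestly a morphism $\mathbb{A}^N\to U$; hence $\overline{\rho}|_U$ is a morphism. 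On $T$, the formula from the previous step shows $\overline{\rho}$ acts by the permutation $h_{\alpha_i}(t_i)\mapsto h_{\rho(\alpha_i)}(t_i)$ of the generating cocharacters, again a morphism. Multiplicativity of these three pieces yields that $\overline{\rho}|_\Omega$ is a morphism.

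Finally, to propagate algebraicity from $\Omega$ to $G$, I would cover $G$ by left translates $\{a\Omega:a\in G\}$. On any such translate one has $\overline{\rho}(ax)=\overline{\rho}(a)\cdot \overline{\rho}(x)$, which factors through left translations and $\overline{\rho}|_\Omega$, all morphisms; gluing over this cover gives that $\overline{\rho}:G\to G$ is a morphism. Applying the same construction to $\rho^{-1}$ yields $\overline{\rho^{-1}}$ as an algebraic inverse, so $\overline{\rho}\in\Aut(G)$; together with $\overline{\rho}(T)=T$ and $\overline{\rho}(B)=B$ this gives $\overline{\rho}\in D$. The main obstacle is ensuring the sign identities $\epsilon_\alpha=\epsilon_{-\alpha}=1$ for $\alpha\in\Delta$, without which the clean formula $\overline{\rho}(h_\alpha(t))=h_{\rho(\alpha)}(t)$ would not drop out; once that simple-root case is settled, the remainder is essentially formal bookkeeping with the Bruhat structure.
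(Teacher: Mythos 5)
Your proposal is correct in substance and its core computations coincide with the paper's: the identity $\overline{\rho}(h_\alpha(t))=h_{\rho(\alpha)}(\epsilon_\alpha t)$ obtained by expanding $h_\alpha(t)=n_\alpha(t)n_\alpha(-1)$ factor by factor, the resulting stability of $T$, $U$ and $B$, and the coordinate description of $\overline{\rho}|_U$ via the product isomorphism $\mathbb{A}^N\cong U$. Where you genuinely diverge is in how algebraicity is propagated from a piece of $G$ to all of $G$: the paper restricts $\overline{\rho}$ to the Borel subgroup $B=TU$ and invokes \cite[Lemma 32.1]{hum}, whereas you work on the big cell $\Omega=U^-TU$ and extend by covering $G$ with translates $a\Omega$, writing $\overline{\rho}|_{a\Omega}=L_{\overline{\rho}(a)}\circ\overline{\rho}|_\Omega\circ L_{a^{-1}}$. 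Your route is more self-contained (it avoids the nontrivial cited lemma at the cost of also checking $U^-$), and your observation that it suffices to treat simple roots, where $\epsilon_\alpha=\epsilon_{-\alpha}=1$ by the quoted normalization, is a mild simplification since $T$ is indeed generated by the $h_\alpha(t)$ with $\alpha\in\Delta$. The one place you are too quick is the assertion that $\overline{\rho}|_T$ is ``again a morphism'': when $G$ is not simply connected the parametrization $f:\mathbb{G}_m^n\to T$, $(t_1,\dots,t_n)\mapsto\prod_i h_{\alpha_i}(t_i)$, is surjective but not injective, so you cannot just read the morphism property off the coordinates; you must note that $\ker(f)=\ker(\overline{\rho}\circ f)$ and factor through the quotient $\mathbb{G}_m^n/\ker(f)\xrightarrow{\sim}T$, which is exactly the commutative-diagram step the paper carries out. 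With that one-line repair your argument is complete.
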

\begin{proof}
We observe that for every $\alpha\in\Phi$, 
\begin{align*}
\overline{\rho}(h_{\alpha}(t))&=\overline{\rho}\left(n_\alpha(t)n_\alpha(-1)\right)=\overline{\rho}\left(x_\alpha(t)x_{-\alpha}(-t^{-1})x_\alpha(t)x_\alpha(-1)x_{-\alpha}(1)x_\alpha(-1)\right)\\
&=x_{\rho(\alpha)}(\epsilon_{\alpha}t)x_{\rho(-\alpha)}(-\epsilon_{-\alpha}t^{-1})x_{\rho(\alpha)}(\epsilon_{\alpha}t)x_{\rho(\alpha)}(-\epsilon_{\alpha})x_{\rho(-\alpha)}(\epsilon_{-\alpha})x_{\rho(\alpha)}(-\epsilon_{\alpha})\\
&=n_{\rho(\alpha)}(\epsilon_{\alpha}t)n_{\rho(\alpha)}(-\epsilon_{\alpha})\quad (\text{since } \rho(-\alpha)=-\rho(\alpha), \epsilon_{\alpha}\epsilon_{-\alpha}=1)\\
&=h_{\rho(\alpha)}(\epsilon_{\alpha}t). 
\end{align*} 
Therefore $\overline{\rho}(T)=T$. Also, note that $\overline{\rho}(B)=B$, since $\overline{\rho}(U)=U$. It remains to show that $\overline{\rho}$ is an algebraic morphism. In view of \cite[Lemma 32.1]{hum}, it is sufficient to prove that the restriction of $\overline{\rho}$ on the Borel subgroup $B$ of $G$ is a morphism. Now, observe that $X_{\alpha}\longrightarrow X_{\rho(\alpha)}$ (defined by $x_\alpha(t)\mapsto x_{\rho(\alpha)}(t), t\in k$) and the product map $X_{\alpha_1}\times X_{\alpha_2}\times \cdots \times X_{\alpha_r}\longrightarrow U$, defined by $(x_{\alpha_1}(t_1),\ldots, x_{\alpha_r}(t_r))\mapsto \prod x_{\alpha_i}(t_i)$ ($t_i\in k, i=1,\ldots,r$) are isomorphisms of varieties, where $\Phi^+=\{\alpha_1, \ldots, \alpha_r\}$ is the set of all positive roots (taken in any order). Therefore $\overline{\rho}$ is an isomorphism of varieties when restricted to $U$. Also, we have the following commutative diagram:
\[\xymatrix{\mathbb{G}_m^n \ar[r]^{f} \ar[dr]_{\overline{\rho}\circ f}  
	& T \ar[d]^{\overline{\rho}} \\
	&T },\] where $f(t_1, \ldots, t_n)=\prod_{i=1}^nh_{\alpha_i}(t_i)$ for $t_i\in k^\times$ and $\{\alpha_1,...,\alpha_n\}$ is the set of all simple roots. This in turn induces 
\[\xymatrix{\frac{\mathbb{G}^n_m}{K} \ar[r]^{\widetilde{f}} \ar[dr]_{\widetilde{\overline{\rho}\circ f}}  
	& T \ar[d]^{\overline{\rho}} \\
	&T }\] where $K:=\ker(f)=\ker(\overline{\rho}\circ f)$, and $\widetilde{f}$ and $\widetilde{\overline{\rho}\circ f}$ are isomorphism of varieties. Therefore, $\overline{\rho}:T\longrightarrow T$ is a morphism. Thus, $\overline{\rho}$ is a morphism when restricted to $B=TU$. Similarly it can be shown that $\overline{\rho}^{-1}$ is also a morphism of varieties. Hence, the proof.
\end{proof}

Before moving on to prove our next set of results we make the following observation:
Let $G$ be a simply connected or adjoint type semisimple algebraic group and $\varphi\in \Aut(G)$. Then by \lemref{staut}, $\varphi=f\circ \Int_g$ for some $f\in D$ and $\Int_g\in \Int(G)$. Now let $\rho\in \Gamma$ be the automorphism of $\Phi$ induced by $f\in D$ and consider the element $\overline{\rho}\in D$ induced by $\rho$ as in \lemref{algebraic}. Since $\overline{\rho}(X_\alpha)=X_{\rho(\alpha)}$ for all $\alpha\in \Phi$, we note that $\overline{\rho}$ also induces the automorphism $\rho$ of $\Phi$. Therefore by Lemma \ref{kernel}, $\overline{\rho}^{-1}\circ f\in \Int(G)$, i.e., $f=\overline{\rho}\circ \Int_y$ for some $y\in G$ and hence, $\varphi=\overline{\rho}\circ \Int_{yg}$. So, by \lemref{inner}, we have $R(\varphi)=R(\overline{\rho})$. We summarise this as: 

\begin{lemma}\label{ets}
Let $G$ be a simply connected or adjoint type semisimple algebraic group. Then $G$ has the $R_\infty$-property if and only if $R(\overline{\rho})=\infty$ for all $\rho\in \Gamma$.
\end{lemma}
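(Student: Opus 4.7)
The plan is to establish both implications, with essentially all the work going into the converse. The forward direction is immediate from Lemma \ref{algebraic}: each $\overline{\rho}$ lies in $\Aut(G)$, so if $G$ has the $R_\infty$-property then in particular $R(\overline{\rho})=\infty$ for every $\rho\in\Gamma$.

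For the converse, I would take an arbitrary $\varphi\in\Aut(G)$ and reduce its Reidemeister count to that of some $\overline{\rho}$. First, Lemma \ref{staut}(\ref{staut1}) lets me factor $\varphi = f\circ \Int_g$ with $f\in D$ and $g\in G$. Through the natural homomorphism $D\to\Gamma$ the element $f$ projects to some $\rho\in\Gamma$, and Lemma \ref{algebraic} produces an algebraic automorphism $\overline{\rho}\in D$ which, because $\overline{\rho}(X_\alpha)=X_{\rho(\alpha)}$ for every $\alpha\in\Phi$, also projects to $\rho$. Hence $f$ and $\overline{\rho}$ have the same image in $\Gamma$.

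The key step is to promote this equality of images in $\Gamma$ to an equality of $f$ and $\overline{\rho}$ modulo $\Int(G)$. This is exactly where the hypothesis that $G$ is of simply connected or adjoint type is consumed: Lemma \ref{kernel} gives that the kernel of $\Aut(G)\to\Gamma$ is precisely $\Int(G)$. Therefore $\overline{\rho}^{-1}\circ f\in\Int(G)$, so $f = \overline{\rho}\circ\Int_y$ for some $y\in G$, which yields
$$\varphi \;=\; \overline{\rho}\circ\Int_y\circ\Int_g \;=\; \overline{\rho}\circ\Int_{yg}.$$
Lemma \ref{inner} then gives $R(\varphi)=R(\overline{\rho}\circ\Int_{yg})=R(\overline{\rho})$, which is infinite by hypothesis. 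Since $\varphi\in\Aut(G)$ was arbitrary, $G$ has the $R_\infty$-property.

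The main obstacle is not computational but conceptual: it is the use of Lemma \ref{kernel} that forces the hypothesis on the isogeny type of $G$. Without the splitting of $\Aut(G)\twoheadrightarrow\Gamma$ with kernel $\Int(G)$, the canonical lift $\overline{\rho}$ and the given $f$ need not agree up to an inner automorphism, and the reduction $R(\varphi)=R(\overline{\rho})$ can fail; so the cleanest way to present the argument is to isolate this point as the one place where simply connected or adjoint type is invoked.
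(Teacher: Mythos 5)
Your proof is correct and follows essentially the same route as the paper: factor $\varphi=f\circ\Int_g$ via Lemma \ref{staut}, use Lemma \ref{kernel} to replace $f$ by $\overline{\rho}\circ\Int_y$, and conclude $R(\varphi)=R(\overline{\rho})$ by Lemma \ref{inner}. Your explicit isolation of where the simply connected or adjoint hypothesis is used (the exactness in Lemma \ref{kernel}) matches the paper's reasoning, which presents the same computation as a preliminary observation and then records the lemma as its summary.
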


Now let $G$ be a reductive algebraic group and $\varphi\in\Aut(G)$. Then $G$ acts on itself via $\varphi$-twisted conjugacy. Let $G//_\varphi G$ be the invariant theoretic quotient. Then there exists a surjective morphism $\pi:G\longrightarrow G//_\varphi G$ which is constant on the orbits in $G$ (cf. \cite[Theorem 3.5]{new78}), i.e., if $x\sim_\varphi y$ for $x,y\in G$, then $\pi(x)=\pi(y)$. Therefore if $[x]_\varphi$ denotes the orbit of $x\in G$, then the assignment $[x]_\varphi\mapsto \pi(x)$ (for all $x\in G$) is well-defined and onto. We record this as 
\begin{lemma}\label{git}
Let $G$ be a reductive algebraic group and $\varphi\in \Aut(G)$. 
Then there exists a surjection $ \mathcal{R}(\varphi)\longrightarrow G\sslash_{\varphi}G$, where $\mathcal{R}(\varphi)$ is the set of all $\varphi$-twisted conjugacy classes of $G$.
\end{lemma}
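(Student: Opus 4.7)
The statement is essentially a packaging of classical geometric invariant theory (GIT), and the paragraph preceding it already sketches the map. My plan is to make that sketch precise in three short steps.

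First, I would verify that $\varphi$-twisted conjugation is a regular action of $G$ on the affine variety $G$. The action map $\alpha : G \times G \to G$, $(g,x) \mapsto g x \varphi(g)^{-1}$, is a morphism because $\varphi \in \Aut(G)$ is (in particular) a morphism of varieties, and multiplication and inversion in $G$ are morphisms. Associativity is a routine check: $(g_1 g_2) \cdot x = g_1 g_2 x \varphi(g_1 g_2)^{-1} = g_1 (g_2 x \varphi(g_2)^{-1}) \varphi(g_1)^{-1} = g_1 \cdot (g_2 \cdot x)$, using that $\varphi$ is a homomorphism. The orbits of this action are exactly the $\varphi$-twisted conjugacy classes $[x]_\varphi$.

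Next, since $G$ is reductive and acts regularly on the affine variety $G$, the invariant-theoretic (categorical) quotient $G \sslash_\varphi G := \mathrm{Spec}(k[G]^G)$ exists as an affine variety, and the canonical morphism $\pi : G \to G \sslash_\varphi G$ is surjective and constant on $G$-orbits. This is precisely the content of \cite[Theorem 3.5]{new78} cited in the excerpt. In particular, if $x \sim_\varphi y$ then $\pi(x) = \pi(y)$, so the assignment
\[
\Psi : \mathcal{R}(\varphi) \longrightarrow G \sslash_\varphi G, \qquad [x]_\varphi \longmapsto \pi(x),
\]
is a well-defined map of sets.

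Finally, $\Psi$ is surjective because $\pi$ is: given $q \in G\sslash_\varphi G$, pick any $x \in \pi^{-1}(q)$ and then $\Psi([x]_\varphi) = q$. This completes the proof.

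I do not expect a real obstacle here; the only point worth being careful about is that the $\varphi$-twisted action is genuinely an algebraic action of a reductive group on an affine variety, which is why the hypothesis that $G$ be reductive and $\varphi$ be an algebraic automorphism are both used. Everything else is a direct invocation of the existence of the categorical quotient.
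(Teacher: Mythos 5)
Your proof is correct and follows essentially the same route as the paper, which likewise defines the $\varphi$-twisted conjugation action, invokes the existence of the categorical quotient $\pi: G \to G\sslash_\varphi G$ from \cite[Theorem 3.5]{new78} (surjective and constant on orbits), and concludes that $[x]_\varphi \mapsto \pi(x)$ is well-defined and onto. Your additional verification that the action map is a morphism is a reasonable elaboration the paper leaves implicit.
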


\begin{proposition}\label{step1}
Every simply connected simple algebraic group $G$ possesses the $R_{\infty}$-property. 
\end{proposition}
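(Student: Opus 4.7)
The plan is to invoke \lemref{ets} and show that $R(\overline{\rho}) = \infty$ for every $\rho \in \Gamma$. The strategy is to exhibit an explicit one-parameter family of torus elements that are fixed by $\overline{\rho}$ and sit in pairwise distinct $\overline{\rho}$-twisted conjugacy classes. Crucially, the construction works uniformly in all cases, including $\Phi = A_{2l}$, where no simple root is $\rho$-fixed (so the remark in \remref{graphexist} cannot be used directly).

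Fix $\rho \in \Gamma$ of order $n$. Pick any simple root $\alpha \in \Delta$ and let $\{\alpha_1, \ldots, \alpha_m\}$ be its $\rho$-orbit (where $m \mid n$). For $t \in k^\times$ set
\[ h_t := \prod_{i=1}^{m} h_{\alpha_i}(t) \in T. \]
Since each $\alpha_i$ is a simple root, Notation~\ref{not} gives $\epsilon_{\alpha_i}=1$, and the computation from the proof of \lemref{algebraic} yields $\overline{\rho}(h_{\alpha_i}(t)) = h_{\rho(\alpha_i)}(t)$. Because $\rho$ permutes the $\alpha_i$ and $T$ is abelian, $\overline{\rho}(h_t) = h_t$ for all $t$.

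The central step is a telescoping identity. Suppose $h_t = g\, h_s\, \overline{\rho}(g)^{-1}$ for some $g \in G$. Applying $\overline{\rho}^j$ and using the $\overline{\rho}$-invariance of $h_t$ and $h_s$, one obtains $\overline{\rho}^j(h_t) = \overline{\rho}^j(g)\, h_s\, \overline{\rho}^{j+1}(g)^{-1}$. Multiplying these relations for $j=0,1,\ldots,n-1$, the interior $\overline{\rho}^j(g)$ factors cancel and, using $\overline{\rho}^n = \Id$ (which we may assume by the splitting in \lemref{kernel} together with \lemref{inner}, replacing $\overline{\rho}$ by an inner-equivalent lift if necessary), we get
\[ h_t^n = g\, h_s^n\, g^{-1}. \]
Hence $h_t^n$ and $h_s^n$ are $G$-conjugate elements of the torus $T$, and therefore lie in the same orbit of the Weyl group $W = N_G(T)/T$.

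To finish, note that by \lemref{onedim1} (applicable since $G$ is of universal type) the map $u \mapsto h_u$ is injective. Hence for each of the finitely many $w \in W$, the equation $h_{t^n} = w(h_{s^n})$ determines $t^n$ uniquely when a solution exists, yielding at most $n$ values of $t$. For any fixed $s$, only finitely many $t$ can satisfy $h_t \sim_{\overline{\rho}} h_s$; since $k^\times$ is infinite, the family $\{[h_t]_{\overline{\rho}} : t \in k^\times\}$ contains infinitely many classes and $R(\overline{\rho}) = \infty$. The main delicate point is ensuring the relation $\overline{\rho}^n = \Id$ used in the telescoping, which is exactly where the simply connected hypothesis is invoked (via the splitting of \lemref{kernel}); the rest is a direct computation together with the uniqueness of the factorization in \lemref{onedim1}.
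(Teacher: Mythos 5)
Your proof is correct, but it takes a genuinely different route from the paper's. The paper also reduces, via \lemref{ets}, to showing $R(\overline{\rho})=\infty$ for each $\rho\in\Gamma$, but it then invokes Springer's theorem that $G\sslash_{\overline{\rho}}G\cong\mathbb{A}^d$ with $d=\dim T^{\overline{\rho}}$, together with \lemref{git}, so that everything comes down to exhibiting a positive-dimensional $\overline{\rho}$-fixed subtorus; the witnesses it uses ($h_\alpha(t)$ for a $\rho$-fixed simple root in Case I, and $h_\alpha(t)h_{\rho(\alpha)}(t)$ in type $A_{2l}$) are exactly your elements $h_t$. You replace Springer's result by an elementary argument: the telescoping identity $h_t^n=g\,h_s^n\,g^{-1}$ reduces $\overline{\rho}$-twisted conjugacy of $\overline{\rho}$-fixed torus elements to ordinary conjugacy of their $n$-th powers, which is then controlled by the finite Weyl group and the injectivity of $t\mapsto h_t$ from \lemref{onedim1}. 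This is more self-contained (no GIT quotient, no case split on the type of $\Phi$, and it subsumes the $\rho=1$ case) at the cost of two standard facts you should make explicit: that elements of $T$ conjugate in $G$ are conjugate under $N_G(T)$, and that $h_\alpha(t)h_\alpha(s)=h_\alpha(ts)$ (immediate from \lemref{onedim}), so that $h_t^n=h_{t^n}$. Two small corrections are needed. First, you should not obtain $\overline{\rho}^n=\Id$ by ``replacing $\overline{\rho}$ by an inner-equivalent lift'': that replacement would destroy the identity $\overline{\rho}(h_t)=h_t$ on which the telescoping rests. Instead, $\overline{\rho}^n=\Id$ holds for the automorphism of \lemref{algebraic} directly, because $\epsilon_\beta=1$ whenever $\beta$ or $-\beta$ is simple and the subgroups $X_\beta$ with $\beta\in\pm\Delta$ generate $G$ (alternatively, run the telescoping with the order of $\overline{\rho}$, which always divides $2n$). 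Second, the simply connected hypothesis is genuinely used in \lemref{onedim1} (uniqueness of the torus factorization, hence injectivity of $t\mapsto h_t$) and in \lemref{ets}, not in forcing $\overline{\rho}^n=\Id$ via the splitting of \lemref{kernel}.
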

\begin{proof}
Let $\Phi$ be the irreducible root system of $G$ determined by a maximal torus $T$.
 In view of \lemref{ets}, it suffices to show that $R(\overline{\rho})=\infty$ for every $\rho\in \Gamma$. First, let $\rho=1\in \Gamma$. Then by \lemref{kernel}, $\overline{\rho}=\Int_y$ for some $y\in G$. Therefore in view of \lemref{inner}, we have $R(\overline{\rho})=R(\Int_y)=R(\Id)$ which is the number of conjugacy classes in $G$. Since  there are infinitely many semisimple conjugacy classes in $G$ (as $k$ is algebraically closed), we conclude that $R(\overline{\rho})=\infty$.
 
  Now assume that $\rho\neq 1$. Then $\Phi$ is of type $A_l \;(l\geq 2), D_l \;(l\geq 4)$ or $E_6$ (cf. \remref{graphexist}).
 By \cite[Corollary 2]{springer}, we have $G\sslash_{\overline{\rho}}G\cong \mathbb{A}^d$, where $d=\dim T^{\overline{\rho}}$ and $T^{\overline{\rho}}=\{t\in T\mid \overline{\rho}(t)=t\}$. By \lemref{git} it is enough to prove that $G\sslash_{\overline{\rho}}G$ is infinite, i.e., $d\geq1$. We do this by considering the following two cases:
 
 \textbf{Case I:} Assume that $\Phi$ is of type $A_{2l+1} (l\geq 1), D_l(l\geq 4)$ or $E_6$. By \remref{graphexist} there exists a simple root (say) $\alpha\in \Phi$, such that $\rho(\alpha)=\alpha$. For each such  $\alpha$, we note that $\overline{\rho}(h_{\alpha}(t))=h_{\rho(\alpha)}(t)=h_{\alpha}(t)$ for all $t\in k^{\times}$ (cf. proof of \lemref{algebraic}). Thus $h_{\alpha}(t)\in T^{\overline{\rho}}$ for all $t\in k^{\times}$. Hence $\{h_\alpha(t)\mid t\in k^\times\}$ is a $1$-dimensional torus (by \lemref{onedim}) contained in $T^{\overline{\rho}}$. Therefore $d=\dim(T^{\overline{\rho}})\geq 1$. 

\textbf{Case II:} Let $\Phi$ be of type $A_{2l}$ ($l\geq 1$). Let $\alpha\in \Phi$ be a simple root. Then $\rho(\alpha)\neq \alpha$. Since $\rho^2=1$ we have  $\overline{\rho}(h_{\alpha}(t)h_{\rho(\alpha)}(t))=h_{\rho(\alpha)}(t)h_{\rho^2(\alpha)}(t)=h_{\rho(\alpha)}(t)h_{\alpha}(t)=h_{\alpha}(t)h_{\rho(\alpha)}(t)$ for all $t\in k^{\times}$. Thus $S:=\{h_{\alpha}(t)h_{\rho(\alpha)}(t)\mid t\in k^\times\}\subset T^{\overline{\rho}}$. Following \lemref{onedim}, we have a homomorphism $\psi:D_2(k)\longrightarrow T$ be defined by $\psi\begin{pmatrix}
t&0\\0&t^{-1}
\end{pmatrix}=\varphi_{\alpha}\begin{pmatrix}
t&0\\0&t^{-1}\end{pmatrix}
\varphi_{\rho(\alpha)}\begin{pmatrix}
t&0\\0&t^{-1}\end{pmatrix}$ 
($t\in k^\times$), where $D_2(k)$ is the diagonal maximal torus of $\SL_2(k)$. Note that  $\psi(D_2(k))=S$, which implies that $S$ is a connected subgroup of $T^{\overline{\rho}}$. Hence $S$ is a torus. We \emph{claim} that $S$ is non-trivial. So if possible let $S$ be trivial, i.e., $h_{\alpha}(t)h_{\rho(\alpha)}(t)=1$ for all $t\in k^\times$. But then by \lemref{onedim1} $h_\alpha(t)=h_{\rho(\alpha)}(t)=1$ for all $t\in k^\times$, a contradiction to the fact that $\{h_\alpha(t)\mid t\in k^\times\}$ is a $1$-dimensional torus. Hence, the claim. Therefore $\dim S=1$ which implies that $d=\dim T^{\overline{\rho}}\geq 1$ as desired.  

This completes the proof.
\end{proof}

\begin{corollary}\label{step2}
 Every simple algebraic group $G$ possesses the $R_{\infty}$-property.
\end{corollary}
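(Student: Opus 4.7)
The plan is to reduce the statement to \propref{step1} by pulling back along the simply connected cover. Let $G$ be a simple algebraic group, and let $\widetilde{G}$ denote its simply connected cover, with central isogeny $\pi\colon \widetilde{G}\to G$ whose kernel $Z$ is a finite subgroup of $Z(\widetilde{G})$. By \propref{step1}, $\widetilde{G}$ has the $R_\infty$-property, and I will transfer the property down to $G$ through the exact sequence
\[1\longrightarrow Z\longrightarrow \widetilde{G}\overset{\pi}{\longrightarrow} G\longrightarrow 1\]
using \lemref{ses}.

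The first step is to verify that every $\varphi\in \Aut(G)$ lifts to an automorphism $\widetilde{\varphi}\in \Aut(\widetilde{G})$ satisfying $\pi\circ \widetilde{\varphi}=\varphi\circ \pi$. Composition yields an isogeny $\varphi\circ \pi\colon \widetilde{G}\to G$, and by the universal property of the simply connected cover there exists a unique morphism $\widetilde{\varphi}\colon \widetilde{G}\to \widetilde{G}$ making the square commute. Applying the same construction to $\varphi^{-1}$ and invoking uniqueness of lifts produces a two-sided inverse, so $\widetilde{\varphi}\in \Aut(\widetilde{G})$. Commutativity of the square forces $\widetilde{\varphi}(Z)=\widetilde{\varphi}(\ker \pi)=\ker \pi=Z$, putting us exactly in the setting of \lemref{ses}.

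Given the lift, the conclusion follows at once: by \propref{step1} we have $R(\widetilde{\varphi})=\infty$; since $Z$ is finite, part~(\ref{gtoq}) of \lemref{ses} (with $N=Z$ and $Q=G$) yields $R(\varphi)=R(\overline{\widetilde{\varphi}})=\infty$. As $\varphi\in \Aut(G)$ was arbitrary, $G$ has the $R_\infty$-property.

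The only delicate point in the argument is producing the lift $\widetilde{\varphi}$, which rests on the universal property of the simply connected cover together with the fact that $Z$ is characteristic in $\widetilde{G}$. Once this is in place, the proof is a direct combination of \propref{step1} and \lemref{ses}(\ref{gtoq}).
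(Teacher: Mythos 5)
Your argument is correct, but it takes a genuinely different route from the paper's. You lift an arbitrary $\varphi\in\Aut(G)$ to the simply connected cover $\widetilde{G}$ and push the $R_\infty$-property of $\widetilde{G}$ (\propref{step1}) down through the finite central kernel via \lemref{ses}(\ref{gtoq}). The paper avoids lifting altogether: it first descends from $G_{sc}$ to the adjoint group $G_{ad}$ (applying \lemref{ses}(\ref{gtoq}) to the explicit graph automorphisms $\widetilde{\rho}$ and $\overline{\rho}$, together with \lemref{ets}) to establish the property for $G_{ad}$, and then descends from $G$ to $G_{ad}$ using \lemref{ses}(\ref{qtog}), exploiting the fact that every automorphism of $G$ preserves $Z(G)$ and hence induces one on $G_{ad}$. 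Your approach is shorter, but its entire burden is the ``delicate point'' you flag: one must know that every \emph{algebraic} automorphism of $G$ lifts to $\widetilde{G}$. This does follow from the universal property of the simply connected cover, since $\varphi\circ\pi$ is again a central isogeny; in positive characteristic one should note that the scheme-theoretic kernel may be infinitesimal, while the group $Z$ fed into \lemref{ses} must be the finite abstract kernel of $\widetilde{G}(k)\to G(k)$ --- the same convention the paper uses. Alternatively, the lifting can be read off from \lemref{staut}, since inner, diagonal and graph automorphisms all visibly lift. The paper's two-step descent buys independence from this lifting fact at the cost of an extra passage through $G_{ad}$.
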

\begin{proof}
Let $\Phi$ be the irreducible root system associated to $G$. Following \secref{agcg} $G$ can be treated as a Chevalley group of type $\Phi$ over $k$. So let $G=G(\pi,k)$ and $G_{sc}=G(\pi_1,k)$ (resp. $G_{ad}=G(\ad,k)$) a universal (resp. adjoint) Chevalley group of type $\Phi$ over $k$ (all constructed from the same complex simple Lie algebra $\mathcal{L}$). 

Let $f_1:G_{sc}\longrightarrow G_{ad}$ and $f_2:G\longrightarrow G_{ad}$ be the homomorphisms given by $f_1(\exp(t\pi_1(e_\alpha)))=\exp(t\ad(e_\alpha))$ and $f_2(\exp(t\pi(e_\alpha)))=\exp(t\ad(e_\alpha))$ respectively (for all $t\in k,\alpha\in \Phi$). Then $\ker(f_1)=Z(G_{sc})$ and $\ker(f_2)=Z(G)$. 

 For every $\rho\in \Gamma$, let   $\overline{\rho}$ (resp. $\widetilde{\rho}$) be the  automorphism of $G_{ad}$ (resp. $G_{sc}$)  induced by $\rho$ as in \lemref{algebraic}. Then we have the following commutative diagram
	\[\xymatrix{
		G_{sc} \ar[r]^{\widetilde{\rho}} \ar[d]_{f_1}  & G_{sc}\ar[d]^{f_1} \\
		G_{ad} \ar[r]^{\overline{\rho}} &G_{ad}}\]
	and a short exact sequence of groups
	\[\xymatrix{1\ar[r]&Z(G_{sc})\ar[r]&G_{sc}\ar[r]^{f_1}&G_{ad}\ar[r]&1}.\]
	Note that for every $\rho\in \Gamma$, $\widetilde{\rho}(Z(G_{sc}))=Z(G_{sc})$. Since $G_{sc}$ has the $R_{\infty}$-property (by \propref{step1}) and $Z(G_{sc})$ is finite, by virtue of \lemref{ses}(\ref{gtoq}) we have $R(\overline{\rho})=\infty$ for all $\rho\in \Gamma$. Hence by \lemref{ets}, $G_{ad}$ has the $R_{\infty}$-property.
	
	Now consider the short exact sequence 
	\[\xymatrix{1\ar[r]&Z(G)\ar[r]&G\ar[r]^{f_2}&G_{ad}\ar[r]&1}.\]
	Observe that for every $\varphi\in \Aut(G)$, we have $\varphi(Z(G))=Z(G)$ and $G_{ad}$ has the  $R_\infty$-property (proven above). Therefore in view of \lemref{ses}(\ref{qtog}), we conclude that $G$ has the $R_{\infty}$-property. Hence the result.
\end{proof}
Now, we are in a position to prove our main theorem of this paper. 
\begin{theorem}\label{mainthm}
Let $G$ be a connected non-solvable linear algebraic group. Then $G$ satisfies the $R_{\infty}$-property. 
\end{theorem}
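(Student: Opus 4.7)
The plan is to reduce \thmref{mainthm} to \corref{step2} and \lemref{productgn} through two successive applications of \lemref{ses}(\ref{qtog}). Throughout I fix $\varphi\in\Aut(G)$.

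First, I would reduce to the case when $G$ is semisimple. The radical $R(G)$ is the unique maximal closed connected solvable normal subgroup of $G$, hence characteristic, so $\varphi(R(G))=R(G)$ and $\varphi$ descends to an automorphism $\overline{\varphi}$ of $H:=G/R(G)$. Since $G$ is non-solvable, $H$ is a non-trivial connected semisimple algebraic group, and \lemref{ses}(\ref{qtog}) gives $R(\varphi)\geq R(\overline{\varphi})$. It therefore suffices to establish the $R_{\infty}$-property for every connected semisimple algebraic group.

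Next, for a connected semisimple $H$ and $\psi\in\Aut(H)$, the center $Z(H)$ is characteristic, so $\psi$ further descends to an automorphism $\psi_{ad}$ of the adjoint quotient $H_{ad}:=H/Z(H)$. A second application of \lemref{ses}(\ref{qtog}) yields $R(\psi)\geq R(\psi_{ad})$, reducing the problem to the adjoint semisimple case. I would then use the structural fact that an adjoint semisimple algebraic group is literally a direct product $L_1^{m_1}\times\cdots\times L_k^{m_k}$ of adjoint simple algebraic groups, where $L_1,\ldots,L_k$ are pairwise non-isomorphic. Any algebraic automorphism $\psi_{ad}$ permutes the simple components and must preserve their isomorphism types (the permuted factors must still be isomorphic), so it stabilises each block $L_j^{m_j}$. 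Writing $\psi_j=\psi_{ad}|_{L_j^{m_j}}\in\Aut(L_j^{m_j})$, the $\psi_{ad}$-twisted conjugacy classes split coordinatewise, so
\begin{equation*}
R(\psi_{ad})=\prod_{j=1}^k R(\psi_j).
\end{equation*}
By \corref{step2} each simple $L_j$ has the $R_{\infty}$-property, and \lemref{productgn} then implies the same for each $L_j^{m_j}$. Hence every $R(\psi_j)$ is infinite, so $R(\psi_{ad})=\infty$, and unwinding the two reductions gives $R(\varphi)=\infty$.

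I expect the main obstacle to be in the third step: verifying that an adjoint semisimple group really is a direct (not merely central isogenous) product of adjoint simple algebraic groups, and that an arbitrary algebraic automorphism must respect the blocks of isomorphic simple components (so that \lemref{productgn}, which is stated only for a single simple group, can be applied blockwise). Once that structural fact is in place, the remainder is a straightforward combination of the lemmas already in the excerpt.
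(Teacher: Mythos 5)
Your proposal is correct and takes essentially the same route as the paper: reduce via the radical to the semisimple case, then via the center to the adjoint case, decompose the adjoint group into blocks of pairwise isomorphic simple factors, and conclude with \corref{step2} and \lemref{productgn}. The only cosmetic difference is that the paper projects onto a single block $G_1^{n_1}$ by one more application of \lemref{ses}(\ref{qtog}) instead of stabilising every block and multiplying Reidemeister numbers, so it only needs one block to have the $R_\infty$-property.
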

\begin{proof}
 First, suppose that $G$ is a connected semisimple algebraic group with root system $\Phi$. Following \secref{agcg}, let $G=G(\pi,k)$ and $G_{ad}=G(\ad, k)$ an adjoint semisimple group of the same type as $G$. Then $G_{ad}$ is a direct product of connected simple groups, i.e., $G_{ad}= G_1^{n_1}\times \cdots \times G_r^{n_r}$, where $G_i$ is simple (for all $1\leq i\leq r$) and $G_i\ncong G_j$ whenever $i\neq j$. Consider the exact sequence $$1\longrightarrow G_2^{n_2}\times\cdots \times G_r^{n_r}\longrightarrow G_{ad}\longrightarrow G_1^{n_1}\longrightarrow 1 .$$
  \noindent Note that $G_2^{n_2}\times\cdots \times G_r^{n_r}$ is invariant under $\Aut(G_{ad})$ and $G_1^{n_1}$ possesses the $R_{\infty}$-property (by \lemref{productgn} and \corref{step2}). Hence by \lemref{ses}(\ref{qtog}), $G_{ad}$ has the $R_{\infty}$-property. If we take the exact sequence $$1\longrightarrow Z(G)\longrightarrow G\overset{f}\longrightarrow G_{ad}\longrightarrow 1$$ where $f:G\longrightarrow G_{ad}$ is given by $f(\exp(t\pi(e_\alpha))=\exp(t\ad(e_\alpha))$ (for all $t\in k$,$\alpha\in \Phi$), we conclude (by \lemref{ses}(\ref{qtog})) that the semisimple group $G$ has the $R_\infty$-property.

 Now let $G$ be a connected non-solvable algebraic group. Then we have the following short exact sequence
\[\xymatrix{1\ar[r]&R(G)\ar[r]&G\ar[r]&G/R(G)\ar[r]&1},\] where $R(G)$ is the radical of $G$.  Note that $R(G)$ is invariant under $\Aut(G)$ and $G/R(G)$ is connected semisimple. Then by the previous paragraph $G/R(G)$ has the $R_\infty$-property and hence, so does $G$ (again by \lemref{ses}(\ref{qtog})).
\end{proof}
\begin{corollary}
Let $G$ be a linear algebraic group such that the connected component $G^0$ of $G$ is non-solvable. Then $G$ satisfies the $R_{\infty}$-property. 
\end{corollary}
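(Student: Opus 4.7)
The plan is to reduce this corollary directly to \thmref{mainthm} using the exact sequence machinery of \lemref{ses}. The key observation is that the identity component $G^0$ is a characteristic subgroup of $G$: it is the unique connected closed subgroup of finite index, hence $\varphi(G^0) = G^0$ for every $\varphi \in \Aut(G)$. In particular, $G^0$ is a connected non-solvable linear algebraic group by hypothesis, and the finite group $G/G^0$ of connected components sits in the short exact sequence
\[
\xymatrix{1\ar[r] & G^0 \ar[r] & G \ar[r] & G/G^0 \ar[r] & 1}.
\]

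First, I would fix an arbitrary $\varphi\in \Aut(G)$. Since $\varphi(G^0)=G^0$, the restriction $\varphi|_{G^0}$ is an algebraic group automorphism of $G^0$. By \thmref{mainthm} applied to the connected non-solvable group $G^0$, we have $R(\varphi|_{G^0}) = \infty$.

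Next, since $G/G^0$ is finite, I can invoke \lemref{ses}(\ref{ntog}) with $N = G^0$ and $Q = G/G^0$: the hypotheses $Q$ finite and $R(\varphi|_N) = \infty$ yield $R(\varphi) = \infty$. As $\varphi$ was arbitrary in $\Aut(G)$, this shows $G$ has the $R_\infty$-property.

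I do not anticipate any genuine obstacle here, since the work has all been done in \thmref{mainthm} and \lemref{ses}. The only point worth a sentence of justification is the characteristic nature of $G^0$, which ensures that \lemref{ses} is applicable at all; once that is in hand, the argument is a one-line application of part (\ref{ntog}) of the lemma.
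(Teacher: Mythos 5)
Your proposal is correct and follows exactly the paper's own argument: the short exact sequence $1\to G^0\to G\to G/G^0\to 1$ with $G^0$ characteristic, \thmref{mainthm} applied to $G^0$, and then \lemref{ses}(\ref{ntog}) using the finiteness of $G/G^0$. No differences worth noting.
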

\begin{proof}
 For any linear algebraic group $G$ we have a short exact sequence 
\[\xymatrix{1\ar[r]&G^{0}\ar[r]&G\ar[r]&G/G^{0}\ar[r]&1},\] where $G^{0}$ is the connected component of $G$ containing the identity and $G/G^0$ is finite. Observe that $G^{0}$ is invariant under every element of $\Aut(G)$ and by \thmref{mainthm}, $G^0$ has the $R_{\infty}$-property. Therefore, in view of  \lemref{ses}(\ref{ntog}), $G$ possesses the $R_{\infty}$-property.
\end{proof}
\begin{corollary}
Let $G$ be a non-toral reductive algebraic group. Then $G$ has the $R_{\infty}$-property.
\end{corollary}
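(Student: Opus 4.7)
The plan is to reduce this corollary immediately to \thmref{mainthm} by showing that, among reductive groups, being non-toral is the same as being non-solvable. The key ingredient is a classical structural fact: a connected solvable linear algebraic group $G$ admits a decomposition $G = T \ltimes R_u(G)$, where $T$ is a maximal torus and $R_u(G)$ is the unipotent radical. If in addition $G$ is reductive, then $R_u(G) = 1$, and so $G = T$ is a torus. Taking the contrapositive, any connected reductive group which is not a torus must be non-solvable, and then \thmref{mainthm} directly gives the $R_\infty$-property.

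For a possibly disconnected non-toral reductive $G$, the plan is to apply the preceding corollary to $G^0$. Since the unipotent radical of $G^0$ is contained in $R_u(G) = 1$, the identity component $G^0$ is itself reductive; interpreting ``non-toral'' as $G^0$ not being a torus, the connected case above shows that $G^0$ is non-solvable. The preceding corollary then propagates the $R_\infty$-property from $G^0$ to $G$ through the exact sequence $1 \to G^0 \to G \to G/G^0 \to 1$ with $G/G^0$ finite.

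I do not foresee any genuine obstacle. The argument is a purely structural reduction: the only nontrivial inputs are the characterization of connected solvable reductive groups as tori (Borel's structure theorem together with vanishing of the unipotent radical) and the fact that $G^0$ inherits reductivity from $G$. Once these are in place, the corollary is immediate from \thmref{mainthm} and its disconnected companion.
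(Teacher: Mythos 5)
Your argument is correct and is essentially the paper's: both reduce the corollary to \thmref{mainthm} by observing that a reductive group which is solvable must be a torus, so a non-toral reductive group is non-solvable. The paper phrases this via the (nontrivial) semisimple quotient $G/R(G)$ while you invoke the decomposition $G=T\ltimes R_u(G)$ of a connected solvable group, but these are the same structural fact; your extra paragraph on the disconnected case is harmless but not needed, since the paper's notion of reductive presupposes connectedness.
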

\begin{proof}
 The proof follows from \thmref{mainthm} since $G/R(G)$ is semisimple.
\end{proof}
We conclude this section with the following two results related to certain classes of solvable groups.  
\begin{proposition}\label{solvable}
	Let $G$ be a connected solvable algebraic group and $T$ a maximal torus of $G$. Suppose that $\varphi(T)=T$ implies $R(\varphi|_{T})=\infty$ for all  $\varphi\in\Aut(G)$. Then $G$ has the $R_{\infty}$-property. 
\end{proposition}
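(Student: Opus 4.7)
The plan is to reduce to the case where $\varphi$ preserves the maximal torus $T$ by exploiting the conjugacy of maximal tori together with \lemref{inner}, and then transfer the $R_\infty$-property from $T$ up to $G$ via the exact sequence coming from $G = T\ltimes R_u(G)$ and \lemref{ses}.

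Fix an arbitrary $\varphi\in \Aut(G)$. Since the unipotent radical $U:=R_u(G)$ is a characteristic subgroup, $\varphi(U)=U$. On the other hand, $\varphi(T)$ is another maximal torus of the connected solvable group $G$, and all maximal tori of $G$ are conjugate, so there exists $g\in G$ with $\varphi(T)=gTg^{-1}$. Setting $\psi:=\Int_{g^{-1}}\circ \varphi$, we have $\psi(T)=T$ and $\psi(U)=U$. Observing that
\[\Int_{g^{-1}}\circ \varphi \;=\; \varphi\circ \Int_{\varphi^{-1}(g^{-1})},\]
\lemref{inner} yields $R(\psi)=R(\varphi)$. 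Thus, without loss of generality, we may assume $\varphi(T)=T$.

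Because $G=T\ltimes U$, the composition $T\hookrightarrow G\twoheadrightarrow G/U$ is an isomorphism of algebraic groups, so the short exact sequence
\[\xymatrix{1\ar[r] & U\ar[r] & G\ar[r] & G/U\ar[r] & 1}\]
identifies $G/U$ canonically with $T$. Since $\varphi$ preserves both $T$ and $U$, for any $t\in T$ and $u\in U$ we have $\varphi(tu)=\varphi(t)\varphi(u)$ with $\varphi(t)\in T$ and $\varphi(u)\in U$, so under this identification the induced automorphism $\overline{\varphi}\in \Aut(G/U)$ corresponds to $\varphi|_T\in \Aut(T)$. By hypothesis, $R(\varphi|_T)=\infty$, hence $R(\overline{\varphi})=\infty$.

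Finally, \lemref{ses}(\ref{qtog}) gives $R(\varphi)\geq R(\overline{\varphi})=\infty$. As $\varphi\in\Aut(G)$ was arbitrary, $G$ has the $R_\infty$-property. The argument is essentially mechanical once the ingredients are in place; the only point requiring slight care is the identification $\overline{\varphi}\leftrightarrow \varphi|_T$, but this is immediate from the semidirect product decomposition and the assumption $\varphi(T)=T$.
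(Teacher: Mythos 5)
Your proof is correct and follows essentially the same route as the paper: conjugate $\varphi(T)$ back to $T$, invoke \lemref{inner} to pass to $\psi$ with $\psi(T)=T$, and then use the decomposition $G=T\ltimes R_u(G)$ to see that the torus part forces infinitely many classes. The only cosmetic difference is that you cite \lemref{ses}(\ref{qtog}) for the quotient $G/R_u(G)\cong T$, whereas the paper writes out the $\psi$-twisted class of $(1,t)$ explicitly and reads off the same conclusion from its second coordinate.
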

\begin{proof}
	The connected solvable group $G$ can be decomposed as $G=G_u\rtimes T$, where $G_u$ is the subgroup consisting of all the unipotent elements of $G$ and $T$ is a maximal torus. 
	Let $\varphi\in \Aut(G)$. Then there exists $g\in G$ such that $g\varphi(T)g^{-1}=T$, i.e., $\Int_g\circ \varphi(T)=T$. Let $\psi:=\Int_g\circ \varphi$. 
	We will write $\psi|_{G_u}=\psi_1$ and $\psi|_{T}=\psi_2$.  
	Therefore $$[(1,t)]_{\psi}=\{(ast\cdot(\psi_2(s)^{-1}\cdot\psi_1(a)^{-1}), st\psi_2(s)^{-1})\mid a\in G_u, s\in T\}.$$
	Note that $\psi(G_u)=G_u$. Since $R(\psi_2)=\infty$ then $R(\psi)=\infty$. Thus, by \lemref{inner}, $R(\varphi)=\infty$. Hence $G$ has the $R_{\infty}$-property.
\end{proof}

\begin{proposition}\label{unipo}
	Let $G$ be a semisimple algebraic group and $U$ a maximal unipotent subgroup of $G$. Then there exists an automorphism $\varphi\in \Aut(U)$ such that $R(\varphi)=1$.
\end{proposition}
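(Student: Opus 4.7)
The plan is to exhibit an explicit element of $\Aut(U)$ whose fixed-point set is trivial, and then invoke Steinberg's theorem (the one recalled in the introduction) to conclude that its Reidemeister number is $1$.

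Fix a Borel subgroup $B=T\ltimes U$ of $G$ so that $U$ is the unipotent radical of $B$, with $T\subset B$ a maximal torus and $\Phi^+$ the positive system determined by $B$. Since $T$ normalizes $U$, for every $t\in T$ the inner automorphism $\Int_t$ of $G$ restricts to an algebraic group automorphism $\varphi_t:=\Int_t|_U\in\Aut(U)$. I would first choose a \emph{regular} element $t\in T$, i.e.\ one with $\alpha(t)\neq 1$ for every $\alpha\in\Phi$. Such a $t$ exists because $T$ is irreducible and the non-regular locus $\bigcup_{\alpha\in\Phi}\ker(\alpha)$ is a finite union of proper closed subgroups of $T$ (recall that $k$ is algebraically closed, hence infinite).

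Next I would show that the fixed-point set $U^{\varphi_t}$ is trivial. Using the standard product isomorphism $\prod_{\alpha\in\Phi^+}X_\alpha\to U$ of varieties (for any fixed ordering of $\Phi^+$), every $u\in U$ has a unique expression $u=\prod_\alpha x_\alpha(t_\alpha)$. Since $\Int_t(x_\alpha(s))=x_\alpha(\alpha(t)s)$ for all $\alpha\in\Phi$ and $s\in k$, and $\Int_t$ is multiplicative, we obtain
\[
\varphi_t(u)=\prod_{\alpha\in\Phi^+}x_\alpha(\alpha(t)\,t_\alpha).
\]
By uniqueness of the decomposition, $\varphi_t(u)=u$ forces $\alpha(t)t_\alpha=t_\alpha$ for every $\alpha\in\Phi^+$; regularity of $t$ then yields $t_\alpha=0$ for all $\alpha$, whence $u=1$.

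Finally, $U$ is a connected linear algebraic group and $\varphi_t$ is a surjective endomorphism with the singleton fixed-point set $\{1\}$, so Steinberg's theorem (\cite[Theorem 10.1]{St}) gives $U=\{g\varphi_t(g)^{-1}\mid g\in U\}$, i.e.\ $R(\varphi_t)=1$. The only real substance in the argument is the product decomposition of $U$ together with the transformation rule for the root subgroups under $T$-conjugation; both are standard consequences of the Chevalley set-up recalled in \secref{agcg}, so no genuine obstacle arises.
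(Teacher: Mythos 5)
Your proof is correct and follows essentially the same route as the paper's: conjugation of $U$ by a regular element of the maximal torus, triviality of the fixed-point set via the unique product decomposition $U\cong\prod_{\alpha\in\Phi^+}X_\alpha$, and Steinberg's theorem to conclude $R(\varphi)=1$. The only (cosmetic) difference is that you establish the existence of the regular element by irreducibility of $T$ versus the finite union of the proper closed subgroups $\ker(\alpha)$, whereas the paper writes the torus element in the coordinates $h=\prod h_{\beta_j}(t_j)$ and picks the $t_j$ by a direct elimination argument.
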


\begin{proof}

By virtue of \cite[Theorem 10.1]{St}, it is enough to find an automorphism $\varphi$ of $U$ such that the fixed point subgroup $U^\varphi$ is finite.

Following \secref{agcg} and Notation \ref{not}, let $G=G(\pi, k)$. We may take $U=\{ \prod_{i=1}^r x_{\alpha_i}(s_i)\mid s_i\in k, \alpha_i\in \Phi^+\}$ and let $T=\{\prod_{i=1}^n h_{\beta_i}(t_i)\mid\alpha\in \Delta, t_i\in k^{\times}\}$ be the maximal torus. 

 \noindent \textbf{Claim:} There exists $t_1,\ldots,t_n\in k^\times$ such that $\prod_{j=1}^nt_j^{<\alpha_i, \beta_j>}-1\neq 0$ for all $i=1,\ldots,r$.
 
 \noindent\emph{Proof of claim:}  Let $f_i(X_1,\ldots,X_n)=\prod_{j=1}^nX_j^{<\alpha_i, \beta_j>}-1\in k(X_1,\ldots,X_n)$ ($i=1,\ldots,r$). If we fix $t_2,\ldots,t_n\in k^\times$, then we have $F_i(X_1)=f_i(X_1,t_2,\ldots,t_n)=b_iX_1^{<\alpha_i,\beta_1>}-1\in k(X_1) (i=1,\ldots,r)$, where $b_i\in k$. Note that the zero set $\mathcal{V}(F_i)$ of each $F_i$ is finite. Hence there exists  a non-zero element $t_1\in k\setminus \bigcup\limits_{i=1}^r\mathcal{V}(F_i)$. Therefore  $f_i(t_1,\ldots,t_n)\neq 0$ for all $i=1,\ldots,r$. Hence, the claim.
 
 Now let $h=\prod_{i=1}^nh_{\beta_i}(t_i)\in T$ (where $t_1,\ldots,t_n$ are as in the claim above) and consider the automorphism $\varphi:U\longrightarrow U$ defined by $\varphi(u)=huh^{-1}$ for all $u\in U$. We know that $h_\beta(t)x_\alpha(s)h_\beta(t)^{-1}=x_\alpha(t^{<\alpha,\beta>}s)$ for all $\alpha\in \Phi^+, \beta\in \Delta, t\in k^\times, s\in k $. So if  $y=\prod_{i=1}^rx_{\alpha_i}(s_i)\in U^\varphi$ then we have $y=\varphi(y)=hyh^{-1}=\prod_{j=1}^n h_{\beta_j}(t_j)\prod_{i=1}^r x_{\alpha_i}(s_i) \prod_{j=1}^n h_{\beta_j}(t_j)^{-1}
=\prod_{i=1}^rx_{\alpha_i}(\prod_{j=1}^nt_j^{<\alpha_i, \beta_j>}s_i)$. Therefore by the uniqueness of the factorisation $y=\prod_{i=1}^rx_{\alpha_i}(s_i)$, we have $(\prod_{j=1}^nt_j^{<\alpha_i, \beta_j>}-1)s_i=0$ (for all $i=1,\ldots,r$).  So by our choice of $t_1,\ldots,t_n$, we have $s_i=0$ (for all $i=1,\ldots,r$) which implies that $y=1$. Thus $U^\varphi=\{1\}$ and hence, $R(\varphi)=1$.
\end{proof}
%%%%%%%%%%%%%%%%%%%%%%%%%%%%%%%%%%%
\section{Examples}\label{example}
In this section we compute $R(\varphi)$ for certain classes of algebraic groups. Example \ref{eg3} shows that the sufficient condition proven in \thmref{mainthm} is not necessary.
\begin{enumerate}
\item Let $\mathrm{D}_n(k)=\{\diag(t_1,\ldots, t_n)\mid t_i\in k^{\times}\}$ ($n\geq 1$) be the diagonal subgroup of $\GL_n(k)$. 
\begin{enumerate}\label{diag}
\item\label{diag1} Let $\varphi$ be an automorphism of $\mathrm{D}_n(k)$ given by $$\varphi(\diag(t_1,\ldots, t_n))=\diag(t_1^{-1},\ldots, t_n^{-1}).$$ Then $\mathrm{D}_n(k)=[I_n]_{\varphi}$. Thus $\mathrm{D}_n(k)$ does not satisfy the $R_{\infty}$-property.
\item For $r\in \mathbb{N}$, let $\varphi_r$ be an automorphism of $\mathrm{D}_n(k)$ given by $$\varphi_r(\diag(t_1, t_2, \ldots, t_n))=\diag(t_n, t_1, t_2, \ldots, t_{n-2}, t_{n-1}t_n^{-r}).$$ Then $\mathrm{D}_n(k)=[I_n]_{\varphi_r}$ for all $r\in \mathbb{N}$.
\end{enumerate} 
\item Let $\varphi=\varphi_1\times \varphi_2$ be the automorphism of $\mathbb{G}_a\times \mathbb G_m$, where $\varphi_1$ is an automorphism of $\mathbb G_a$ given by $\varphi_1(x)=\alpha x$ for $\alpha\in k^{\times}\setminus \{1\}$ and $\varphi_2$ is an automorphism of $\mathbb{G}_m$ given by $\varphi_2(x)=x^{-1}$. Then $R(\varphi)=R(\varphi_1)R(\varphi_2)=1$. Therefore $\mathbb{G}_a\times \mathbb G_m$ does not satisfy the $R_{\infty}$-property. 
\item\label{unipotent} Let $G=\U_n(k)$ ($n\geq 1$) be the group of all upper triangular unipotent matrices of $\GL_n(k)$, $d:=\diag(t_1, t_2, \ldots, t_n)\in \mathrm{D}_n(k)$ such that $t_i\neq t_j$ for all $i\neq j$ and  define an automorphism $\varphi_d: \U_n(k)\longrightarrow \U_n(k)$ by $\varphi_d(g)=dgd^{-1}$ for all $g\in \U_n(k)$. 

\noindent
\textbf{Claim:} $R(\varphi_d)=1$, i.e. $g\sim_{\varphi_d}I_n$ for all $g\in\U_n(k)$.

\noindent\emph{Proof of claim:} Let $g=(g_{ij})\in \U_n(k)$. It suffices to find some $y\in \U_n(k)$ such that $g=y^{-1}\varphi_d(y)$ or equivalently,
\begin{equation}\label{solution}
yg=dyd^{-1}
\end{equation}
 For $1\leq i<j\leq n$ let us consider the following system of equations in the variables $x_{ij}$
\begin{equation}\label{eq}
(t_it_j^{-1}-1)x_{ij}=g_{ij}+\displaystyle\sum_{i< k<j}x_{ik}g_{kj}
\end{equation}

Now, by the assumption on $d$, we note that $(t_it_j^{-1}-1)\neq 0$ for all $i\neq j$. Therefore, it is clear that Equation \eqref{eq} admits a unique solution say $y_{ij}\quad (1\leq i<j\leq n)$. If we set $y_{ii}=1$ for all $1\leq i\leq n$ and $y_{ij}=0$ for all $i>j$, then $y=(y_{ij})$ satisfies Equation \eqref{solution}. Hence, the proof. \qed 

(See \cite[Theorem 2]{timur19}).
\item Let $G=\mathrm{D}_l(k)\times \U_n(k)$ ($l, n\geq 1$). The map $\psi:G\longrightarrow G$ defined by $\psi(s,u)=(\varphi(s),\varphi_d(u))$  (for all $(s,u)\in G$) is an automorphism (where $\varphi$ is as in Example \ref{diag1} and $d,\varphi_d$ as in Example \ref{unipotent}). Observe that $R(\psi)=R(\varphi)R(\varphi_d)=1$. 
\item\label{eg3} The standard Borel subgroup $\B_2(k)=\Big\{\begin{pmatrix}a&b\\0&a^{-1}\end{pmatrix}\mid a\in k^{\times}, b\in k\Big\}$ of $\SL_2(k)$ satisfies $R_\infty$-property. Note that $\B_2(k)=U\rtimes T\cong \mathbb{G}_a\rtimes \mathbb{G}_m$, where $T=\Big\{\begin{pmatrix}a&0\\0&a^{-1}\end{pmatrix}\mid a\in k^{\times}\Big\}\cong\mathbb{G}_m$ and $U=\Big\{\begin{pmatrix}
1&y\\0&1
\end{pmatrix}\mid y\in k\Big\}\cong \mathbb{G}_a$. Let $\varphi\in \Aut(\B_2(k))$. Since $T$ and $\varphi(T)$ are two maximal torus then there exists $g\in \B_2(k)$ such that $g\varphi(T)g^{-1}=T$. Then the automorphism $\psi:=\Int_g\circ \varphi$ of $\B_2(k)$ stabilizes $U$ as well as $T$. Hence there exists $\alpha\in k^\times$ such that $\psi\left(\begin{pmatrix}
1&y\\0&1
\end{pmatrix}\right)=\begin{pmatrix}
1&\alpha y\\0&1
\end{pmatrix}$, for all $y\in k$. Now the restriction of $\psi$ on $T$ has two possibilities : either $\psi(\diag(t, t^{-1}))=\diag(t, t^{-1})$ or $\psi(\diag(t, t^{-1}))=\diag(t^{-1}, t)$ for all $t\in k^{\times}$.

\noindent
\textbf{Claim:} $\psi(\diag(t, t^{-1}))=\diag(t, t^{-1})$ for all $t\in k^{\times}$.

\noindent \emph{Proof of claim:} If possible let $\psi(\diag(t, t^{-1}))=\diag(t^{-1}, t)$ for all $t\in k^{\times}$. Then we have $\psi \left(\begin{pmatrix}a&b\\0&a^{-1}\end{pmatrix}\right)=\psi\left(\begin{pmatrix}
a&0\\0&a^{-1}
\end{pmatrix}\right) \psi \left(\begin{pmatrix}
1&ba^{-1}\\0&1
\end{pmatrix}\right)$ $=\begin{pmatrix}
a^{-1}&0\\0&a
\end{pmatrix}\begin{pmatrix}
1&\alpha ba^{-1}\\0&1
\end{pmatrix}=\begin{pmatrix}a^{-1}&\alpha a^{-2}b\\0&a\end{pmatrix}$ on $\B_2(k)$. But clearly, this is not a homomorphism. Hence the claim. 

Therefore $\psi=\Id$ on $T$. 
Thus, $R(\psi|_{T})=R(\Id)=\infty$. Then, in view of  \propref{solvable}, $R(\psi)=\infty$. By \lemref{inner}, we have $R(\varphi)=R(\Int_g\circ \varphi)=R(\psi)=\infty$.
 Hence the group $\B_2(k)$  has the $R_{\infty}$-property although it is solvable. 
\end{enumerate}
%%%%%%%%%%%%%%%%%%%%%%%%%%%%%%%%%%%%%%%%%%
\medskip
\textbf{Acknowledgements:} 
The authors would like to thank Maneesh Thakur, Shripad Garge and Timur Nasybullov for their valuable suggestions and comments on this work. We also thank the reviewers for some extremely crucial comments and suggestions which improved the exposition considerably.

\end{document}